\newcommand{\bTheorem}[1]{\bigskip \begin{thm} \label{T#1}}
\newcommand{\eT}{\end{thm} \bigskip }
\newcommand{\bProposition}[1]{\bigskip \begin{prop} \label{P#1}}
\newcommand{\eP}{\end{prop} \bigskip }
\newcommand{\bLemma}[1]{\bigskip \begin{lem} \label{L#1}}
\newcommand{\eL}{\end{lem} \bigskip }
\newcommand{\bCorollary}[1]{\bigskip \begin{cor} \label{C#1}}
\newcommand{\eC}{\end{cor} \bigskip }
\newcommand{\bFormula}[1]{\begin{equation} \label{#1}}
\newcommand{\eF}{\end{equation}}
\newcommand\eps{\varepsilon}
\newcommand{\HH}{{\mathcal H}}
\newcommand{\Dc}{{\mathcal D}}
\newcommand{\UU}{{\mathcal U}}
\DeclareMathOperator{\cp}{cap}
\newcommand{\ds}{\displaystyle}
\newcommand{\RR}{\mathbb R}
\newcommand{\R}{\RR}
\newcommand{\N}{\mathbb N}
\newcommand{\Z}{\mathbb Z}
\def\M{{\mathcal M}}
\def\Rc{{\mathcal R}}
\def\Lc{{\mathcal L}}
\newcommand{\sm}{\setminus}
\newcommand{\g}{\gamma}
\newcommand{\Om}{\Omega}
\newcommand{\G}{\Gamma}
\newcommand{\lb}{\lambda}
\newcommand{\vps}{\varepsilon}
\newcommand{\vphi}{\varphi}
\newcommand{\ra}{\rightarrow}
\newcommand{\rau}{\rightharpoonup}
\newcommand{\Lra}{\Longrightarrow}
\newcommand{\hr}{\hookrightarrow}
\newcommand{\nif}{{n \rightarrow \infty}}
\newcommand{\sq}{\subseteq}
\newcommand{\MoN}{\M_0 (\RR^N)}
\newcommand{\MoNp}{\M_0^p (\RR^N)}
\newcommand{\NN}{\mathbb N}
\def\bbbq{{\mathchoice {\setbox0=\hbox{$\displaystyle\rm Q$}\hbox{\raise
 0.15\ht0\hbox to0pt{\kern0.4\wd0\vrule height0.8\ht0\hss}\box0}}
 {\setbox0=\hbox{$\textstyle\rm Q$}\hbox{\raise
 0.15\ht0\hbox to0pt{\kern0.4\wd0\vrule height0.8\ht0\hss}\box0}}
 {\setbox0=\hbox{$\scriptstyle\rm Q$}\hbox{\raise
 0.15\ht0\hbox to0pt{\kern0.4\wd0\vrule height0.7\ht0\hss}\box0}}
 {\setbox0=\hbox{$\scriptscriptstyle\rm Q$}\hbox{\raise
 0.15\ht0\hbox to0pt{\kern0.4\wd0\vrule height0.7\ht0\hss}\box0}}}}
\newtheorem{thm}{Theorem}[section]
\newtheorem{exmp}[thm]{Example}
\newtheorem{prop}[thm]{Proposition}
\newtheorem{lem}[thm]{Lemma}
\newtheorem{cor}[thm]{Corollary}
\newcommand{\qed}{\ifmmode$\Box$\else{\unskip\nobreak\hfil
  \penalty50\hskip1em\null\nobreak\hfil$\Box$
  \parfillskip=0pt\finalhyphendemerits=0\endgraf}\fi}
\newtheorem{demth}{Proof}
  \newenvironment{proof}{\begin{demth}\rm}{\qed\end{demth}}
\title{On the characterization of the compact embedding of Sobolev spaces}
\author {Dorin Bucur\thanks{\scriptsize\ Laboratoire de Math\'ematiques (LAMA), Universit\'e de Savoie, Campus Scientifique - 73376 Le-Bourget-Du-Lac, FRANCE 
\texttt{dorin.bucur@univ-savoie.fr}}\qquad\qquad Giuseppe Buttazzo\thanks{\scriptsize\ Dipartimento di Matematica, Universit\`a di Pisa, Largo B. Pontecorvo 5 - 56127 Pisa, ITALY
\texttt{buttazzo@dm.unipi.it}}
}
\date{}
\begin{document}

\maketitle

\bigskip

\begin{abstract}
For every positive regular Borel measure, possibly infinite valued, vanishing on all sets of $p$-capacity zero, we characterize the compactness of the embedding $W^{1,p}(\RR^N)\cap L^p (\RR^N,\mu)\hr L^q(\RR^N)$ in terms of the qualitative behavior of some characteristic PDE. This question is related to the well posedness of a class of geometric inequalities involving the torsional rigidity and the spectrum of the Dirichlet Laplacian introduced by Polya and Szeg\"o \cite{posz51} in 1951. In particular, we prove that finite torsional rigidity of an arbitrary domain (possibly with infinite measure), implies the compactness of the resolvent of the Laplacian.
\end{abstract}

\medskip\noindent
{\bf AMS Subject Classification (2000):} 49Q10, 49J45, 46E35, 47A10, 74P05

\bigskip\noindent
{\bf Keywords:} Sobolev spaces, compact embedding, torsion problem, geometric inequalities

\section{Introduction}\label{secintr}

The study of the compact embeddings of Sobolev spaces $W^{1,p}_0(\Om)$ into $L^p(\Om)$ has a long history; starting from the simplest case when $\Om$ is bounded, in which the compactness always occurs, several generalizations have been found (see for instance \cite{adfo}). For $p=2$, this is related (actually equivalent) to the compactness of the resolvent operator $\Rc_\Om:L^2(\Om)\to L^2(\Om)$ which associates to every function $f\in L^2(\Om)$ the solution of the elliptic PDE
$$\left\{\begin{array}{ll}
-\Delta u+u=f\quad\hbox{in }\Om\\
u\in H^1_0(\Om),
\end{array}\right.$$
then providing the existence of a discrete spectrum made by a nondecreasing sequence of eigenvalues.

Analogously, the same kind of problems arise for the Schr\"odinger operator
$$\left\{\begin{array}{ll}
-\Delta u+u+V(x)u=f\quad\hbox{in }\R^N\\
u\in H^1(\R^N);
\end{array}\right.$$
the compactness of its resolvent operator $\Rc_V:L^2(\R^N)\to L^2(\R^N)$ is again an important issue for its spectral analysis.

We unify the two topics considering the so-called {\it capacitary measures} and the related Sobolev spaces $W^{1,p}_\mu$ (see Section \ref{secsobo} for the precise definitions); when $\mu=\infty_{\R^N\sm\Om}$ we recover the usual Sobolev space $W^{1,p}_0(\Om)$, while $\mu=V(x)\,dx$ provides the natural space for the Schr\"odinger operator.

The main results of the paper deal with some characterizations for the compact embeddings
$$W^{1,p}_\mu:=W^{1,p}(\RR^N)\cap L^p_\mu\hr L^p(\R^N)\quad\hbox{and}\quad W^{1,p}_\mu\hr L^1(\R^N)$$
in terms of the qualitative behavior of the formal solution (see the precise definition in Section \ref{secsobo}) of the equation
$$-\Delta_p w+w^{p-2}w+\mu w^{p-2} w=1.$$
Precisely, we prove that the (inclusion and) compactness $W^{1,p}_\mu\hr L^1(\R^N)$ is equivalent to  $\int_{\R^N}w\,dx < +\infty$ and the compactness $W^{1,p}_\mu\hr L^p(\R^N)$ is equivalent to the uniform vanishing at infinity of $w$.

Of course, as soon as the compact embedding $W^{1,p}(\RR^N)\cap L^p_\mu\hr L^q(\RR^N)$ holds for $q=1$ or $q=p$, the embedding $W^{1,p}(\RR^N)\cap L^p_\mu\hr L^r(\RR^N)$ is also compact for every $q \le r < p^* $, where $p^*=Np/(N-p)$, by a standard argument based on H\"older inequality and completeness of $L^r(\R^N)$ (see for instance \cite[Lemma 6.7]{adfo}).

Clearly, if  the torsional rigidity of the measure $\mu$ is finite (take $p=2$) then  by the maximum principle $\int_{\R^N}w\,dx < +\infty$ so that the embedding $H^1_\mu \hr L^q(\R^N)$ holds for every $1<q<2^*$. The torisional rigidity of $\mu$ is defined by 
$$P(\mu)=\int_{\R^N} u dx,\;\;\mbox{where}\;-\Delta u +\mu u=1\;\mbox{in}\;(H^1_\mu)', \; \mbox{and}\;u \in H^1_\mu.$$
The question of analysing the torsional rigidity and the torsion function in relationship with the geometric domain and the spectrum of the Dirichlet Laplacian was already addressed in \cite{vdBC} and \cite{BvdBC}. Precisley, in these papers the authors are interested to situations when the torsion function belongs to $L^\infty (\R^N)$ and the torsional rigidity is finite. As well, sufficient conditions expressed in terms of the distance function to the boundary of the domains give information about summability of $u$.

For the simplicity of the exposition we prove all results for $p=2$, which is more rich than the nonlinear framework. In the last section, we briefly consider the general case $1<p<+\infty$ for which we point out the main differences with respect to the Hilbertian case.

\section{The Sobolev space $H^1_\mu$}\label{secsobo}

We will use in the following the notion of {\it capacity} of a subset $E$ of $\R^N$, defined by
$$\cp(E)=\inf\Big\{\int_{\R^N}|\nabla u|^2+u^2\,dx\ :\ u\in\UU_E\Big\}\,,$$
where $\UU_E$ is the set of all functions $u$ of the Sobolev space $H^1(\R^N)$ such that $u\ge1$ almost everywhere in a neighbourhood of $E$. Below we summarize the main properties of the capacity and the related convergences. For further details we refer to \cite{bubu05} or to \cite{hepi05}.

If a property $P(x)$ holds for all $x\in E$ except for the elements of a set $Z\subset E$ with $\cp(Z)=0$, then we say that $P(x)$ holds {\it quasi-everywhere} (shortly {\it q.e.}) on $E$. The expression {\it almost everywhere} (shortly {\it a.e.}) refers, as usual, to the Lebesgue measure.

A subset $\Omega$ of $\R^N$ is said to be {\it quasi-open} if for every $\eps>0$ there exists an open subset $\Omega_\eps$ of $\R^N$, such that $\cp(\Omega_\eps{\scriptstyle\Delta}\Omega)<\eps$, where ${\scriptstyle\Delta}$ denotes the symmetric difference of sets. Equivalently, a quasi-open set $\Omega$ can be seen as the set $\{u>0\}$ for some function $u$ belonging to the Sobolev space $H^1(\R^N)$. Note that a Sobolev function is only defined quasi-everywhere, so that a quasi-open set $\Omega$ does not change if modified by a set of capacity zero.

A function $f:\R^N\to\R$ is said to be {\it quasi-continuous} (respectively {\it quasi-lower semicontinuous} if for every $\eps>0$ there exists a continuous (respectively lower semicontinuous) function $f_\eps:\R^N\to\R$ such that $\cp(\{f\ne f_\eps\})<\eps$. It is well known (see, e.g., Ziemer \cite{ziemer}) that every function $u$ of the Sobolev space $H^1(\R^N)$ has a quasi-continuous representative, which is uniquely defined up to a set of capacity zero. We shall always identify the function $u$ with its quasi-continuous representative, so that a pointwise condition can be imposed on $u(x)$ for quasi-every $x\in\R^N$. Notice that with this convention we have
$$\cp(E)=\min\Big\{\int_{\R^N}|\nabla u|^2+|u|^2\,dx\ :\ u\in H^1(\R^N),\ 
u\ge1\hbox{ q.e. on }E\Big\}.$$
For every quasi-open set $\Omega\subset\R^N$ we denote by $H^1_0(\Omega)$ the space of all functions $u\in H^1(\R^N)$ such that $u=0$ {\it q.e.} on $\R^N\setminus\Omega$, endowed with the Hilbert space structure inherited from $H^1(\R^N)$. In this way $H^1_0(\Omega)$ is a closed subspace of $H^1(\R^N)$. If $\Omega$ is open, then the definition above of $H^1_0(\Omega)$ is equivalent to the usual one (see \cite{ah96}). If $\Omega$ is bounded the linear operator $-\Delta$ on $H^1_0(\Omega)$ has a compact resolvent, hence a discrete spectrum, denoted by $\lambda_1(\Omega)\le\lambda_2(\Omega)\le\lambda_3(\Omega)\le\cdots$; for general $\Omega$ this is not true and the question is related to the compact embedding of $H^1_0(\Omega)$ into $L^2(\Omega)$ which will be considered in the next section.

More generally, we can consider the Sobolev spaces $H^1_\mu$ made with respect to the so-called {\it capacitary measures}; precisely, we consider nonnegative regular Borel measures $\mu$ on $\R^N$, possibly $+\infty$ valued, that vanish on all sets of capacity zero. The family of these measures is denoted by $\MoN$. We stress the fact that the measures $\mu$ above do not need to be finite, and may take the value $+\infty$ even on large parts of $\R^N$.

\begin{exmp}{\rm
If $N-2<\alpha\le N$ the $\alpha$-dimensional Hausdorff measure $\HH^\alpha$ is a capacitary measure (and consequently every $\mu$ absolutely continuous with respect to $\HH^\alpha$ as well). In fact all Borel sets with capacity zero have a Hausdorff dimension which is less than or equal to $N-2$.
}\end{exmp}

\begin{exmp}{\rm
Another example of capacitary measure is, for every $S\subset\R^N$, the measure $\infty_S$ defined by
\begin{equation}\label{e421}
\infty_S(B)= \left\{
\begin{array}{ll}
0&\mbox{if }\cp(B\cap S)=0,\\
+\infty&\mbox{otherwise}.
\end{array}\right.
\end{equation}
}\end{exmp}

The norm
$$\|u\|^2_{1,\mu}=\int_{\R^N}\big(|\nabla u|^2+|u^2|\big)\,dx+\int_{\R^N} |u|^2\,d\mu$$
makes
$$H^1_\mu=\big\{u\in H^1(\R^N)\ :\ \|u\|_{1,\mu}<+\infty\big\}$$
a Hilbert space, and for every $f\in L^2(\R^N)$ (or more generally for $f\in(H^1_\mu)'$) we may consider the elliptic PDE
\begin{equation}\label{pde}
-\Delta u+u+\mu u=f,\qquad u\in H^1_\mu
\end{equation}
whose precise sense has to be given in the weak form
$$\int_{\R^N}\big(\nabla u\nabla\phi+u\phi)\,dx+\int_{\R^N} u\phi\,d\mu=\int_{\R^N}f\phi \,dx \qquad\forall\phi\in H^1_\mu.$$
Notice that, since the Sobolev functions $u$ are defined quasi-everywhere and the capacitary measures $\mu$ vanish on all sets with capacity zero, the products $u\mu$ are well defined. In particular, if $\mu=\infty_S$ we have
$$H^1_\mu=\big\{u\in H^1(\R^N)\ :\ u=0\hbox{ q.e. on }S\big\}.$$

By standard Lax-Milgram methods, for every $f\in L^2(\R^N)$ equation \eqref{pde} has a unique solution, that we denote by $\Rc_\mu(f)$; in this way we may define the resolvent operator $\Rc_\mu:L^2(\R^N)\to L^2(\R^N)$ whose compactness will be discussed in the next section.

For every $\mu\in\MoN$ and every quasi-open set $\Omega$ we define the Dirichlet restriction
$$\mu\lceil\Omega=\mu+\infty_{\R^N\setminus\Omega}$$
which takes the value $+\infty$ outside $\Omega$; in other words, solving the PDE \eqref{pde} with $\mu\lceil\Omega$ means that we are considering the same PDE but with Dirichlet condition $u\in H^1_0(\Omega)\cap H^1_\mu $. The classical restriction of measures is denoted by $\mu\lfloor \Om$.

The space $\MoN$ of all capacitary measures can be endowed with an interesting convergence structure, called $\g$-convergence: we say that $\mu_n\to\mu$ in the $\g$ convergence if for every ball $B$
$$\Rc_{\mu_n\lceil B}\to\Rc_{\mu\lceil B}\mbox{ in }\Lc\big(L^2(\R^N)\big).$$
It is well known (see for instance \cite{bubu05}), that the $\g$-convergence is equivalent to any of the assertions below
\begin{itemize}
\item
$\Rc_{\mu_n\lceil B}(f)\to\Rc_{\mu\lceil B}(f)$ weakly in $L^2(\R^N)$, for all balls $B$ and for all $f\in L^2(\R^N)$
\item
$\Rc_{\mu_n\lceil B}(1)\to\Rc_{\mu\lceil B}(1)$ weakly in $L^2(\R^N)$, for all $R>0$
\end{itemize}

The $\g$-convergence is metrizable and the family of measures $\MoN$ is compact for the $\g$-convergence. Clearly, the spectrum of the operator in \eqref{pde} is $\g$-continuous on the families $\{\mu\lceil B_R\ :\ \mu\in\MoN\}$, but in general is not $\g$-continuous on $\MoN$. Moreover, the family of measures of the form $\infty_S$ with $S$ smooth and compact (that we often identify with the domain $\R^N\setminus S$) is $\g$-dense in $\MoN$ as well as the family of measures of the form $a(x)\,dx$ with $a(x)$ smooth.

In the following we shall use the function $w_\mu$ that {\it formally} solve the PDE
$$-\Delta u+u+\mu u=1.$$
Since in general the constant $1$ does not belong to $(H^1_\mu)'$ we define $w_\mu$ as
$$w_\mu=\lim_{R\to+\infty}\Rc_{\mu\lceil B_R}(1).$$
By the maximum principle the limit above exists since the solutions $\Rc_{\mu\lceil B_R}(1)$ are monotonically increasing with $R$; moreover, it is easy to see that $0\le\Rc_{\mu\lceil B_R}(1)\le1$, so that $0\le w_\mu\le1$.

In this way, there is a classical extension of the operator $\Rc_\mu$ on $L^\infty (\R^N)$, defined by
$$\Rc_\mu(f)=\sup_R \Rc_{\mu\lceil B_R}(f^+)-\sup_R \Rc_{\mu\lceil B_R}(f^-),$$ 
which is linear and continuous (see for instance \cite{bida06}).

For every measure $\mu$ we denote by $\lb_1(\mu)$ the spectral abscissa of the Laplacian associated to the measure $\mu$ by
$$\lb_1(\mu)=\inf_{u\in H^1_\mu,\ u\ne0}\frac{\int_{\R^N}|\nabla u|^2\,dx
+\int_{\R^N}u^2\,dx+\int_{\R^N}u^2\,d\mu}{\int_{\R^N}u^2\,dx}.$$
Clearly if $H^1_\mu$ is compactly embedded in $L^2(\R^N)$, the spectral abscissa is the first eigenvalue of the operator in \eqref{pde}. By an abuse of notation, we still denote it $\lb_1(\mu)$, even if the compact embedding does not hold. 

Throughout the paper we consider a given function $\theta\in C^\infty_c(B_2)$ such that $0\le\theta\le1$ and $\theta=1$ on $B_1$. For every $R>0$, we set $\theta _R(x)=\theta (\frac{x}{R})$. We shall often use the fact that for every function $u\in H^1_\mu$ we have $u\theta_R\to u$ strongly in $H^1_\mu$ as $R\to+\infty$, and the estimate
$$\begin{array}{ll}
\ds\int_{\R^N}|\nabla(u\theta_R)|^2\,dx&\ds\le2\int_{\R^N}|\nabla u|^2\,dx+
2\int_{\R^N}u^2|\nabla\theta_R|^2\,dx\\
&\ds\le2\int_{\R^N}|\nabla u|^2\,dx+
\frac{2|\nabla\theta|_\infty^2}{R^2}\int_{\R^N}u^2\,dx.
\end{array}$$

\section{Characterization of the compactness in the linear frame}\label{secchar}

Here are the main results of the paper. 
\begin{thm}\label{t01}
Let $\mu\in\MoN$. Then the embedding $H^1_\mu\hr L^2(\R^N)$ is compact if and only if 
$$w_\mu\cdot1_{B_R^c}\to0\mbox{ in $L^\infty(\RR^N)$ as }R\to+\infty.$$
\end{thm}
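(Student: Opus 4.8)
The plan is to establish both implications through the behavior of the torsion-type function $w_\mu$, exploiting the monotone approximation $w_\mu=\sup_R\Rc_{\mu\lceil B_R}(1)$ and the comparison (maximum) principle.

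For the "only if" direction, suppose the embedding $H^1_\mu\hr L^2(\R^N)$ is compact but $w_\mu\cdot1_{B_R^c}$ does not go to $0$ in $L^\infty$. Then there exist $\delta>0$ and points $x_n\to\infty$ with $w_\mu(x_n)\ge\delta$ (using that $w_\mu$ is quasi-continuous, one should work with quasi-every points or with suitable averages on small balls to make this rigorous). The first step is to extract, from the functions $\Rc_{\mu\lceil B_{R_n}}(1)$ localized near $x_n$, a bounded sequence in $H^1_\mu$ that does not converge strongly in $L^2$: concretely I would look at $v_n:=\theta_{\rho}(\cdot-x_n)\,\Rc_{\mu\lceil B_{R_n}}(1)$ for a fixed radius $\rho$ (chosen using that $0\le\Rc_{\mu\lceil B_R}(1)\le 1$ together with the energy identity $\int|\nabla\Rc_{\mu\lceil B_R}(1)|^2+\Rc_{\mu\lceil B_R}(1)^2\,dx+\int\Rc_{\mu\lceil B_R}(1)\,d\mu=\int\Rc_{\mu\lceil B_R}(1)\,dx\le$ controlled on a ball). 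These $v_n$ have bounded $H^1_\mu$-norm, are supported on disjoint balls going to infinity, and have $L^2$-norm bounded below by a positive constant — a Poincaré/elliptic argument on each ball shows $w_\mu(x_n)\ge\delta$ forces a definite amount of $L^2$-mass near $x_n$. Disjoint supports imply $v_n\rightharpoonup0$ weakly, so a compact embedding would force $v_n\to0$ strongly in $L^2$, a contradiction.

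For the "if" direction, assume $w_\mu\cdot1_{B_R^c}\to0$ in $L^\infty$ and take a bounded sequence $u_n$ in $H^1_\mu$; after passing to a subsequence, $u_n\rightharpoonup u$ weakly in $H^1_\mu$. Using the cut-off $\theta_R$ and the estimate recalled in Section \ref{secsobo}, it suffices to show that the tails $\int_{B_R^c}u_n^2\,dx\to0$ uniformly in $n$ as $R\to+\infty$, since on each fixed ball $B_{2R}$ the classical Rellich theorem gives strong convergence of $u_n$ in $L^2(B_{2R})$. To control the tail, the key is a pointwise inequality: for $u\in H^1_\mu$ one has, testing the equation defining $\Rc_{\mu\lceil B_R}(1)$ against $u^2$ (or rather against $(u^2-$something$)^+$) and using $\mu u^2\ge0$, a bound of the form $\int_{\Omega}u^2\,dx\le C\,\|w_\mu\|_{L^\infty(\text{region})}\big(\int|\nabla u|^2+u^2\,dx+\int u^2\,d\mu\big)$ localized away from a large ball. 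More precisely, I would use that $w_\mu$ is a supersolution of $-\Delta w+w+\mu w=1$ together with the Maz'ya-type / capacitary characterization: smallness of $w_\mu$ at infinity gives a Poincaré-type inequality $\int_{B_R^c}|v|^2\,dx\le \epsilon(R)\|v\|_{1,\mu}^2$ for all $v\in H^1_\mu$ supported in $B_R^c$, with $\epsilon(R)=C\|w_\mu 1_{B_R^c}\|_\infty\to0$. Applying this to $v=u_n(1-\theta_R)$ and using the gradient estimate to absorb the cut-off error yields the uniform tail smallness, hence strong $L^2$-convergence of $u_n$ after the diagonal argument.

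The main obstacle I anticipate is the precise derivation of the localized Poincaré inequality $\int_{B_R^c}v^2\,dx\le C\|w_\mu 1_{B_R^c}\|_\infty\,\|v\|_{1,\mu}^2$ from smallness of $w_\mu$: this is the heart of the matter and is where the "formal solution" $w_\mu$ must be handled carefully, since $1\notin(H^1_\mu)'$ in general. The clean way is to work with $\Rc_{\mu\lceil B_k}(1)$ for large $k$, test the weak formulation against $v^2/\Rc_{\mu\lceil B_k}(1)$ (a ground-state-substitution / Allegretto-Piepenbrink type trick), obtaining $\int v^2\,dx = \int \Rc_{\mu\lceil B_k}(1)\big(\nabla(v^2/\Rc_{\mu\lceil B_k}(1))\nabla \Rc_{\mu\lceil B_k}(1)+\dots\big)$ and then bounding the right-hand side by $\|\Rc_{\mu\lceil B_k}(1)\|_{L^\infty(B_R^c)}$ times the $H^1_\mu$-energy of $v$ via Cauchy-Schwarz; letting $k\to\infty$ replaces $\Rc_{\mu\lceil B_k}(1)$ by $w_\mu$. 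Justifying that this substitution is legitimate (positivity of $\Rc_{\mu\lceil B_k}(1)$ on its support, integrability of the quotient) is the delicate step; everything else is standard elliptic and functional-analytic bookkeeping.
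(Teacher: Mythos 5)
Your proposal is correct and the overall architecture tracks the paper's proof: for \emph{necessity}, both you and the paper build a localized sequence from $w_{\mu\lceil B_{R_n}}$ near points $x_n\to\infty$, bound its $H^1_\mu$-norm by testing the equation, show weak convergence to $0$ from the escaping supports, and get a positive lower bound on the $L^2$-mass from the sub-mean-value property implied by $-\Delta w_{\mu\lceil B_{R_n}}\le 1$; for \emph{sufficiency}, both establish that $\lambda_1(\mu\lceil B_R^c)\to+\infty$ and then kill the tails of a weakly null bounded sequence by testing with $u_n(1-\theta_R)$.

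Where you genuinely diverge is in the proof of the key spectral estimate ``$w_\mu\le\vps$ on $B_R^c$ implies $\lambda_1(\mu\lceil B_R^c)\ge 1/\vps$.'' The paper (Lemma \ref{l01}) first reduces, via density of $\{u\theta_R\}$ and $\gamma$-density of bounded open sets, to the case of a bounded open $\Om$, and there uses the first eigenfunction $u_1$: from $-\Delta u_1+u_1\le\lambda_1\|u_1\|_\infty$ and the comparison principle, $u_1\le\lambda_1\|u_1\|_\infty w_\Om$, and taking the sup yields $1\le\vps\lambda_1$. You instead propose the ground-state (Allegretto--Piepenbrink/Agmon) substitution: test the equation for $w_k:=\Rc_{\mu\lceil B_k}(1)$ against $v^2/(w_k+\delta)$, use $\nabla w_k\cdot\nabla\bigl(v^2/(w_k+\delta)\bigr)\le|\nabla v|^2$, drop the nonnegative terms, and send $\delta\to0$, $k\to\infty$ to obtain $\int v^2/w_\mu\,dx\le\|v\|_{1,\mu}^2$, hence the Poincar\'e bound on $B_R^c$. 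This is valid and has the advantage of working directly with $w_{\mu\lceil B_k}$ on the measure level, sidestepping the reduction to bounded open sets and the existence/$L^\infty$-regularity of a first eigenfunction; the cost is the regularization $w_k\mapsto w_k+\delta$ and the need to take $v$ with compact support before passing to the limit (so that $v^2/(w_k+\delta)\in H^1_{\mu\lceil B_k}$), which you correctly flag as the delicate point. Both derivations are standard; the paper's is shorter once the $\gamma$-density reduction is available, yours is more self-contained at the level of measures.
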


\begin{thm}\label{t02}
Let $\mu \in \MoN$. The following assertions are equivalent.
\begin{enumerate}
\item $w_\mu \in L^1(\R^N)$.
\item $H^1_\mu\subset L^1(\R^N)$ with continuous injection.
\end{enumerate}
Moreover, if one of the two assertions above holds, then the embedding $H^1_\mu\hr L^1(\R^N)$ is compact.
\end{thm}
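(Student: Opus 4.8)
The plan is to exploit the pointwise bound $0 \le w_\mu \le 1$ together with the variational characterization of $w_\mu$ as the increasing limit of the resolvents $\Rc_{\mu\lceil B_R}(1)$, and to test the weak formulation of the PDE against well-chosen competitors. For the implication $(2)\Rightarrow(1)$, the natural approach is to note that each truncated torsion function $w_R := \Rc_{\mu\lceil B_R}(1) \in H^1_\mu$, and that testing its own equation against $1$ (legitimate here because $w_R \in H^1_0(B_R)$, so the constant can be replaced by a cutoff $\theta_{2R}$ which equals $1$ on the support of $w_R$) yields
$$\int_{\R^N}\big(|\nabla w_R|^2 + w_R^2\big)\,dx + \int_{\R^N} w_R^2\,d\mu = \int_{\R^N} w_R\,dx.$$
If $H^1_\mu \hookrightarrow L^1(\R^N)$ with continuous injection, the right-hand side is bounded by $C\|w_R\|_{1,\mu}$; combined with $w_R \le 1$, which gives $\int w_R\,dx \le \int_{\R^N} w_R^2\,dx + \text{(something controlled)}$... more carefully, one gets $\|w_R\|_{1,\mu}^2 \le C\|w_R\|_{1,\mu}$, hence $\|w_R\|_{1,\mu} \le C$ uniformly in $R$. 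Then $\int_{\R^N} w_R\,dx \le C'$ uniformly, and by monotone convergence $w_\mu \in L^1(\R^N)$ with $\int w_\mu \le C'$.

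For $(1)\Rightarrow(2)$, the idea is to estimate $\int_{\R^N}|u|\,dx$ for $u \in H^1_\mu$ by using $w_\mu$ as a weight. Formally, $u^2 = (-\Delta + 1 + \mu)w_\mu \cdot u^2$... the cleaner route is: for $u \in H^1_\mu$, test the equation for $w_R$ against the admissible test function $u^2\theta_R$ (or rather handle the truncation carefully, approximating $u$ by $u\theta_k$ first), obtaining an identity of the form $\int u^2\theta_R\,dx = \int \nabla w_R \cdot \nabla(u^2\theta_R)\,dx + \int w_R u^2\theta_R\,dx + \int w_R u^2 \theta_R\,d\mu$, and bound the right-hand side by $C\|w_\mu\|_{L^1}\,\|u\|_{1,\mu}^2$ after using $0 \le w_R \le w_\mu \le 1$ and Cauchy--Schwarz on the gradient term (writing $\nabla w_R \cdot \nabla(u^2) = 2 u\,\nabla w_R \cdot \nabla u$ and $2|u||\nabla w_R||\nabla u| \le w_R^{-1}|\nabla w_R|^2 u^2 \cdot(\dots)$ — here one must be cautious since $w_R$ may vanish). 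A more robust substitute is to use the known extension of $\Rc_\mu$ to $L^\infty$ and the estimate $\int_{\R^N}|u|\,dx \le \big(\int w_\mu\,dx\big)^{1/2}\big(\int u^2/w_\mu\,dx\big)^{1/2}$ is not available either since $1/w_\mu$ need not be integrable against $u^2$. The safest plan is therefore the direct one: pass through the identity $\|u\theta_R\|_{L^1}^2 \le \big(\int_{\R^N}w_\mu\,dx\big)\big(\int_{\R^N}\frac{(u\theta_R)^2}{w_\mu}\,dx\big)$ only on the quasi-open set $\{w_\mu>0\}$ (which contains $\{u\ne 0\}$ up to capacity zero, since $u=0$ q.e. where $w_\mu=0$ by comparison in the PDE), and control $\int u^2/w_\mu$ by testing the $w_\mu$-equation against $u^2/w_\mu$ — this is the classical ``hardy-type'' trick for torsion functions.

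The last assertion, compactness of $H^1_\mu \hookrightarrow L^1(\R^N)$, follows from the continuous injection plus a tightness argument: given a bounded sequence $\{u_n\}$ in $H^1_\mu$, on each ball $B_R$ the embedding $H^1(B_R)\hookrightarrow L^1(B_R)$ is compact (Rellich), so a diagonal subsequence converges in $L^1_{loc}$; tightness at infinity comes from $\int_{B_R^c}|u_n|\,dx \le \big(\int_{B_R^c} w_\mu\,dx\big)^{1/2}\big(\int_{B_R^c}u_n^2/w_\mu\,dx\big)^{1/2} \le C\big(\int_{B_R^c}w_\mu\,dx\big)^{1/2} \to 0$ as $R\to\infty$ uniformly in $n$, by the same Hardy-type bound and $w_\mu \in L^1$. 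Combining, $\{u_n\}$ is Cauchy in $L^1(\R^N)$.

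The main obstacle I anticipate is making the Hardy-type estimate $\int u^2/w_\mu \le C\|u\|_{1,\mu}^2$ rigorous: the test function $u^2/w_\mu$ is only formally admissible, $w_\mu$ may vanish, and one works with the increasing approximations $w_R$ and with $u\theta_k$, so the limiting procedure (monotone/dominated convergence, plus checking that the bad set $\{w_\mu=0\}$ is negligible for $u$) requires care. Everything else is routine once that inequality is in hand.
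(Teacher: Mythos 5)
Your $(2)\Rightarrow(1)$ step is essentially the paper's argument: from the identity $\|w_{\mu\lceil B_R}\|_{H^1_\mu}^2=\|w_{\mu\lceil B_R}\|_{L^1}$ (obtained by testing the equation for $w_{\mu\lceil B_R}$ against itself, not against the constant $1$ as you wrote, though the display you give is the right one) and the continuous injection one gets $\|w_{\mu\lceil B_R}\|_{H^1_\mu}\le C$ uniformly, hence a uniform $L^1$-bound, and monotone convergence finishes.

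The genuine gap is in $(1)\Rightarrow(2)$ and in the compactness: you route both through the Hardy-type inequality $\int u^2/w_\mu\,dx\le C\|u\|_{H^1_\mu}^2$, and you acknowledge yourself that you cannot make this rigorous (admissibility of $u^2/w_\mu$ as a test function when $w_\mu$ vanishes, behavior near $\{w_\mu=0\}$, the passage $w_{\mu\lceil B_R}\to w_\mu$). That inequality is not needed, and the detour makes the proof harder than it has to be. The paper's argument is much shorter: from $\|w_{\mu\lceil B_R}\|_{H^1_\mu}^2=\|w_{\mu\lceil B_R}\|_{L^1}\nearrow\|w_\mu\|_{L^1}<+\infty$ one deduces first that $w_\mu\in H^1_\mu$ and that the weak equation $-\Delta w_\mu+w_\mu+\mu w_\mu=1$ holds against compactly supported test functions of $H^1_\mu$. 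Then for $v\in H^1_\mu$, $v\ge0$, one tests the equation for $w_\mu$ against $\theta_R v$ and lets $R\to+\infty$ to get
$$\int_{\R^N} v\,dx=\int_{\R^N}\nabla w_\mu\cdot\nabla v\,dx+\int_{\R^N}w_\mu v\,dx+\int_{\R^N}w_\mu v\,d\mu,$$
and Cauchy--Schwarz immediately gives $\|v\|_{L^1(\R^N)}\le\|w_\mu\|_{H^1_\mu}\|v\|_{H^1_\mu}$; no weight $1/w_\mu$ appears. For compactness, test the same equation against $(1-\theta_R)v_n$ and apply Cauchy--Schwarz term by term to obtain
$$\int_{\R^N}(1-\theta_R)v_n\,dx\le\Big(\int_{\R^N\sm B_R}\big(|\nabla w_\mu|^2+w_\mu^2\big)\,dx+\int_{\R^N\sm B_R}w_\mu^2\,d\mu\Big)^{1/2}\,\big\|(1-\theta_R)v_n\big\|_{H^1_\mu},$$
whose first factor tends to $0$ as $R\to\infty$ because $w_\mu\in H^1_\mu$, while $\int\theta_R v_n\,dx\to0$ at fixed $R$ by local compactness. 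Your Rellich-plus-tightness architecture is sound, but the tightness at infinity should come from the smallness of the tail of the $H^1_\mu$-norm of $w_\mu$, not from a Hardy inequality; as it stands the key quantitative step of your $(1)\Rightarrow(2)$ and of your compactness argument is missing.
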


For the sake of completeness, in Proposition \ref{t01} below we collect several characterizations of the compact embedding in $L^2(\R^N)$ in the case of capacitray measures. Some of them are only direct extensions to capacitary measures of well known characterizations holding for domains or positive potentials absolutely continuous with respect to the Lebesgue measure, of the form $V(x) dx$. In the case of open sets, condition 4) is a consequence of \cite[Theorem 3.1]{bida06} while condition 8) is an extension to measures of the characterization of the compactness in \cite[Theorem 6.19]{adfo}.

\begin{prop}\label{p01}
Let $\mu\in\MoN$. The following assertions are equivalent.
\begin{enumerate}
\item $H^1_\mu\hr L^2(\R^N)$ is compact;
\item $\Rc_\mu$ is compact;
\item $\Rc_{\mu\lceil B_R}\to\Rc_\mu$ in $\Lc(L^2(\R^N))$ as $R\to+\infty$;
\item $\Rc_{\mu\lceil B_R}\to\Rc_\mu$ in $\Lc(L^\infty(\R^N))$ as $R\to+\infty$;
\item $\lb_1(\mu\lceil B_R^c)\to+\infty$ as $R\to+\infty$;
\item for every $h>0$, $\lb_1(\mu\lceil B_h(x))\to+\infty$ as $\|x\|\to+\infty$;
\item there exists $h>0$, $\lb_1(\mu\lceil B_h(x))\to+\infty$ as $\|x\|\to+\infty$;
\item there exists $h>0$, such that for every $\vps>0$ there exists $r\ge0$ such that
$$\inf_{u\in H^1_{\mu\lfloor H}(\R^N),\ u\ne0}\frac{\int_H|\nabla u|^2\,dx
+\int_H u^2\,d\mu}{\int_Hu^2\,dx}\ge\frac{1}{\vps},$$
for every $N$-cube $H=[x,x+h)$, with $\|x\|\ge r$.
\end{enumerate}
\end{prop}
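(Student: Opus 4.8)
The plan is to prove Proposition \ref{p01} by establishing a cycle of implications among the eight conditions, using Theorem \ref{t01} as the pivot between the "resolvent / spectral abscissa" conditions and the "local uniform smallness" conditions. First I would dispatch the easy equivalences: $1\Leftrightarrow 2$ is immediate because $\Rc_\mu$ is, up to the compact embedding, essentially the composition of the bounded solution operator $L^2\to H^1_\mu$ with the inclusion $H^1_\mu\hr L^2$, so compactness of the inclusion gives compactness of $\Rc_\mu$, and conversely compactness of $\Rc_\mu$ forces compactness of $H^1_\mu\hr L^2$ since $\Rc_\mu$ restricted to $H^1_\mu$ is (up to equivalence of norms given by the resolvent identity $\|\Rc_\mu u\|_{1,\mu}\le\|u\|_{2}$) comparable to the inclusion. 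The implications $3\Rightarrow 2$ and $4\Rightarrow 3$ are trivial since $\Rc_{\mu\lceil B_R}$ has compact resolvent (bounded domain) for each $R$, hence a norm limit of compact operators is compact, and $L^\infty$-convergence of the resolvents implies $L^2$-convergence after a truncation argument using $0\le \Rc_{\mu\lceil B_R}(f)\le\Rc_{\mu\lceil B_R}(\|f\|_\infty)\le\|f\|_\infty w_\mu$.

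Next I would close the loop back to condition $1$ via Theorem \ref{t01}: condition $1$ is equivalent to $w_\mu\cdot 1_{B_R^c}\to 0$ in $L^\infty$, and I would use this to get condition $4$. Indeed, for $f\in L^\infty(\R^N)$ with $\|f\|_\infty\le 1$, the difference $\Rc_\mu(f)-\Rc_{\mu\lceil B_R}(f)$ is controlled (by the maximum principle and linearity on $f^\pm$) by $2\big(w_\mu-\Rc_{\mu\lceil B_R}(1)\big)$, and one shows $\|w_\mu-\Rc_{\mu\lceil B_R}(1)\|_\infty\to 0$ exactly when $w_\mu\cdot 1_{B_R^c}\to0$ in $L^\infty$, because the increasing family $\Rc_{\mu\lceil B_R}(1)$ agrees with $w_\mu$ near the origin and the defect lives at infinity; here monotonicity plus the uniform bound $\le1$ and a Dini-type argument give uniform convergence. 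That gives $1\Rightarrow 4$, so $1,2,3,4$ are all equivalent.

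For the spectral-abscissa conditions I would argue $5\Leftrightarrow 1$ by comparing $\lb_1(\mu\lceil B_R^c)$ with the $L^\infty$-size of $w_\mu$ restricted to $B_R^c$: if $\lb_1(\mu\lceil B_R^c)\to\infty$ then the torsion-type estimate $\|w_{\nu}\|_\infty\le C/\lb_1(\nu)$ for $\nu=\mu\lceil B_R^c$ (obtained by testing the equation for $\Rc_{\nu\lceil B_S}(1)$ against itself and using the definition of $\lb_1$) forces $w_\mu\cdot1_{B_R^c}\to 0$ in $L^\infty$; conversely, if the embedding is compact then on $H^1_{\mu\lceil B_R^c}$, which is contained in $H^1_\mu$ with $u=0$ q.e.\ outside $B_R^c$, a compactness/contradiction argument (extract a normalized minimizing sequence, pass to the weak limit, which must be $0$ by the vanishing at infinity but has unit $L^2$ norm) shows $\lb_1\to\infty$. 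The chain $5\Rightarrow 6\Rightarrow 7$ is monotonicity in the domain of the Rayleigh quotient (smaller sets, or sets escaping to infinity, have larger $\lb_1$; and $6$ trivially implies $7$); the real work is $7\Rightarrow 5$, a covering argument: fixing the $h$ from condition $7$, cover $B_R^c$ by finitely-overlapping balls $B_h(x_j)$ with $\|x_j\|\ge R-h$, and use the fact that $\lb_1$ of a union of sets is bounded below by the minimum of the $\lb_1$'s (up to a bounded-overlap constant via a partition-of-unity localization of the Rayleigh quotient) to get $\lb_1(\mu\lceil B_R^c)\ge c\min_j\lb_1(\mu\lceil B_h(x_j))\to\infty$. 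Finally $7\Leftrightarrow 8$ is a routine reformulation: the infimum in $8$ is exactly $\lb_1(\mu\lceil H)$ with $H$ a cube of side $h$ (dropping the lower-order $\int u^2\,dx$ from the numerator only changes $\lb_1$ by an additive constant $1$, which is irrelevant as $\lb_1\to\infty$), and cubes and balls of comparable size are mutually inscribable, so the two "escape to infinity" statements coincide after adjusting $h$.

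The main obstacle I expect is the quantitative link in the $5\Leftrightarrow 1$ step and in $7\Rightarrow 5$, namely the inequality relating $\|w_\nu\|_{L^\infty}$ to $1/\lb_1(\nu)$ together with the bounded-overlap localization of the spectral abscissa over a cover by balls; getting the constants uniform in $R$ and handling the fact that $\mu$ may take the value $+\infty$ on large sets (so one must work with the Dirichlet restrictions $\mu\lceil B_R$ and pass to the monotone limit throughout, rather than with $\mu$ directly) is where care is needed. Everything else is bookkeeping with the maximum principle, monotonicity of $R\mapsto\Rc_{\mu\lceil B_R}$, and the already-established Theorem \ref{t01}.
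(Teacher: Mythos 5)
Your high-level scheme (a cycle among $1$--$4$ closed via Theorem~\ref{t01}, a second cycle among $5$--$8$, and a bridge between the two via the spectral abscissa) is the same as the paper's, and several of the sub-implications you list ($1\Rightarrow 5$ by contradiction, $5\Rightarrow 6\Rightarrow 7$ by monotonicity, $7\Rightarrow 5$ by a bounded-overlap covering and partition of unity, $3\Rightarrow 2$) match the paper almost verbatim. However there are a few genuine gaps.

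The most serious one is your treatment of $7\Leftrightarrow 8$ as a ``routine reformulation.'' Condition $8$ is stated with the \emph{classical} restriction $\mu\lfloor H$, i.e.\ the infimum runs over all $u\in H^1(\R^N)\cap L^2(\mu\lfloor H)$ with \emph{no} boundary condition on $\partial H$, and the Rayleigh quotient is computed only over $H$. Condition $7$ instead uses the \emph{Dirichlet} restriction $\mu\lceil B_h(x)$, whose test functions vanish q.e.\ outside the ball. Dropping the $\int u^2$ term and swapping balls for cubes is indeed cosmetic; the real content of $7\Rightarrow 8$ is upgrading a lower bound with Dirichlet conditions to a lower bound without any boundary condition. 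The paper handles this by contradiction: given a bad sequence $u_n$ on cubes $H_n$ with $\|u_n\|_{L^2(H_n)}=1$, it forms $v_n=\min\{w_{H_n},u_n\}$ (a genuine $H^1_{\mu\lceil H_n}$ function, because $w_{H_n}$ vanishes on $\partial H_n$) and shows that $v_n$ still contradicts $7$ after translating and passing to the limit. Your proposal, which inscribes cubes in balls and conflates the two Rayleigh quotients, misses this entirely; as written $7\Rightarrow 8$ is not proved.

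A second gap is $4\Rightarrow 3$. You propose a truncation argument using $0\le\Rc_{\mu\lceil B_R}(f)\le\|f\|_\infty w_\mu$, but operator convergence in $\Lc(L^\infty)$ on all of $\R^N$ does not transfer to $\Lc(L^2)$ by truncation: $\|w_\mu-w_{\mu\lceil B_R}\|_\infty\to 0$ gives you no control on $\|w_\mu-w_{\mu\lceil B_R}\|_{L^2}$, since the difference has unbounded support and the domain has infinite measure. The paper avoids this by showing $4\Rightarrow 1$ (from $\|w_\mu-w_{\mu\lceil B_R}\|_\infty\to0$ one gets $w_\mu\cdot1_{B_R^c}\to 0$ in $L^\infty$ by monotonicity, then invokes Theorem~\ref{t01}), and then $1\Rightarrow 3$ separately by a weak-convergence/diagonalization argument using self-adjointness. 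Your cycle, as written, does not include any proof of $1\Rightarrow 3$ (or $2\Rightarrow 3$), so even with the $4\Rightarrow 1$ repair it does not close.

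Finally, two smaller points. Your claim that $2\Rightarrow 1$ is ``immediate'' from a resolvent identity needs to be unpacked: an arbitrary bounded sequence in $H^1_\mu$ is not of the form $\Rc_\mu(f_n)$ with $f_n$ bounded in $L^2$, so one has to go through the square root $\Rc_\mu^{1/2}$ (whose range is $H^1_\mu$ with equivalent norm), or else route through $5$ as the paper does ($2\Rightarrow 5\Rightarrow 1$). And your $5\Rightarrow 1$ relies on a torsion-type bound $\|w_\nu\|_\infty\le C/\lb_1(\nu)$ for arbitrary capacitary measures $\nu$; such an estimate is plausible but is not in the paper (the paper only proves the reverse inequality in Lemma~\ref{l01}, and then goes from $5$ to $1$ directly via the $\theta_R$ cutoff argument), so you would need to supply a proof of it.
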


In the sequel we often use the notation $[x,y)$ for the set $\Pi_{i=1}^N[x_i,y_i)$. We now give some examples, some of them classical, in order to highlight the various conditions in Theorems \ref{t01}, \ref{t02} and Proposition \ref{p01}.

\begin{exmp}\label{ex00}{\rm
Let $\Om\sq\R^N$ be an open set such that for some $h>0$ $|\Om\cap B_h(x)|\to0$ as $\|x\|\to+\infty$. It is well known that in this case the embedding $H^1_0(\Om)\hr L^2(\Om)$ is compact. Indeed, the Faber-Krahn inequality gives that 
$$\lb_1(\Om\cap B_h(x))\ge \lb_1([\Om\cap B_h(x)]^*),$$
where $[\Om\cap B_h(x)]^*$ is the ball of the same volume as $\Om\cap B_h(x)$. Since $|\Om\cap B_h(x)|\to0$, then $\lb_1([\Om\cap B_h(x)]^*)\to+\infty$, so that condition 7) in Proposition \ref{p01} is satisfied.
}\end{exmp}

\begin{exmp}\label{ex01}{\rm
Let $\Om\sq\RR^N$ be an open set such that $|\Om|<+\infty$. Then $H^1_0(\Om)\hr L^2(\Om)$ is compact. This is an immediate consequence of Example \ref{ex00}
}\end{exmp}

\begin{exmp}{\rm
There exist situations when the conditions of Example \ref{ex01} is only sufficient and not necessary for compactness. Simply consider in $\RR^2$
$$\Om=(0,+\infty)\times(0,1)\sm\bigcup_{n\in\NN}\{x_n\}\times(0,1),$$
where
$$x_n=\log(1+n).$$
Clearly, $|\Om|=+\infty$ and also $|\Om\cap B_1(x_n,0)|\not\to0$, but condition 7) of Proposition \ref{p01} is satisfied since
$$\lb_1\big((x_n,x_{n+1})\times(0,1)\big)\to+\infty.$$
}\end{exmp}

\begin{exmp}{\rm
We give here an example showing that higher order sumability of $w_\mu$ is not related to the compact embedding of $H^1_\mu $ in $L^2(\R^N)$. Consider as in the previous example
$$\Om=(0,+\infty)\times(0,1)\sm\bigcup_{n\in\NN}\{x_n\}\times(0,1),$$
where $(x_n)_n$ is an increasing sequence of positive numbers such that $x_{n+1}-x_n\to0$. This readily gives the compact embedding of $H^1_0(\Om)$ in $L^2(\R^N)$. Clearly, we can tune the $x_n$ such that for some $\alpha>0$ $\int w_\mu^\alpha\,dx=+\infty$. In particular, if $\alpha=1$ there is no compact embedding in $L^1(\R^N)$.
}\end{exmp}

\begin{exmp}{\rm
Obviously, the measure $\mu$ has a decisive influence on the compactness. It is well known that if $\mu=V(x)\,dx$, with $V:\R^N\to[0, +\infty)$, measurable and $V(x)\to+\infty$ as $\|x\|\to+\infty$, then $H^1_\mu\hr L^2(\R^N)$ is compact as a consequence of condition 7) in Proposition \ref{p01}. In other words, the Schr\"odinger operator $-\Delta u+u+V(x)u$ has a compact resolvent. 
}\end{exmp}

\begin{exmp}{\rm
In order to get the compactness embedding $H^1_\mu\hr L^2(\R^N)$ for measures of the form $\mu=V(x)\,dx$, it is not necessary to require that $V(x)\to+\infty$ as $\|x\|\to+\infty$. Indeed, in $\R^2$ one can consider for instance $V(x_1,x_2)=|x_1|^\alpha|x_2|^\beta$, for some $\alpha,\beta>0$. In this case, one can prove easily that condition 7) in Proposition \ref{p01} is still satisfied, by analyzing the $\g$-convergence of the measures $Vdx\lceil B_h(x_n)$.
}\end{exmp}

\begin{exmp}{\rm
Let $\mu=V(x) dx$, with $V:\R^N\to(0,+\infty)$, measurable. If there exists $\beta \in (0,1]$ such that $\frac{1}{V^\beta}\in L^1 (\R^N)$, then 
$H^1_\mu$ is compactly embedded in $L^1(\R^N)$ (hence in $L^2(\R^N)$). Indeed, we have
$$\int_{\R^N}w_{\mu\lceil B_R}\,dx
=\int_{\R^N}V^\alpha w_{\mu\lceil B_R}\frac{1}{V^\alpha}\,dx
\le\Big(\int_{\R^N}[V^\alpha w_{\mu\lceil B_R}]^p\,dx\Big)^{\frac{1}{p}}
\Big(\int_{\R^N}\frac{1}{V^{\alpha q}}\,dx\Big)^{\frac{1}{q}},$$
where $p>1$ and $\frac{1}{p}+\frac{1}{q}=1$.

Choosing $\alpha=\frac{1}{p}$ and noticing that for $p\ge2$ we have $\alpha q \in(0,1]$, we fix $p$ such that $\alpha q=\beta$, so $\int_{\R^N}\frac{1}{V^{\alpha q}}\,dx<+\infty$. For the first term, we use the fact that $w _{\mu\lceil B_R} \le 1$ and $p \ge 2$, so that
$$\int_{\R^N}w_{\mu\lceil B_R}\,dx
\le\Big(\int_{\R^N}V w_{\mu\lceil B_R}^2\,dx
\Big)^{\frac{1}{p}}|V^\beta|_{L^1(\R^N)}^{\frac{1}{q}}.$$
Using the equation satisfied by $w_{\mu\lceil B_R}$, we obtain
$$\int_{\R^N}V w_{\mu\lceil B_R}^2\le\int_{\R^N}w_{\mu\lceil B_R}\,dx$$
thus we get
$$\Big(\int_{\R^N}w_{\mu\lceil B_R}\,dx\Big)^{1-\frac{1}{p}}
\le|V^\beta|_{L^1(\R^N)}^{\frac{1}{q}}.$$
Passing to the limit as $R\to\infty$, we conclude that $w_\mu\in L^1(\R^N)$ so that Theorem \ref{t02} applies.
}\end{exmp}

\section{Further remarks and applications}\label{secfurt}

The main motivation of the paper originates from a conjecture of Polya and Szeg\"o \cite{posz51} which states that among all simply connected membranes $\Om\sq \R^2$, the minimum of the product $P(\Om)\lb_1^2 (\Om)$ is attained on balls. Here $P(\Om)$ stands for the torsional rigidity and $\lb_1(\Om)$ for the first Dirichlet eigenvalue of the Laplacian on $\Om$. By definition
$$P(\Om)=\int_\Om u\,dx,$$
where $u\in H^1_0(\Om)$ solves in $\Dc'(\Om)$
$$-\Delta u=1.$$
The conjecture was proved in 1978 by Kohler-Jobin in \cite{kj78} and extended to inhomogeneous membranes in \cite{kj78bis} by a rather sophisticated ``dearrangement'' procedure. Naturally, we can reframe the problem as
\begin{equation}\label{eq04}
\min\{\lb_1(\Om)\ :\ \Om\sq\R^N,\ P(\Om)=c\}
\end{equation}
and obtain a sort of ``isoperimetric'' problem, where the usual constraint on $\Om$ set in terms of area or perimeter is replaced by a constraint on torsional rigidity. A natural generalization of this problem is the following: for $k\in\N$ solve
\begin{equation}\label{eq05}
\min\{\lb_k(\Om)\ :\ \Om\sq\R^N,\ P(\Om)=c\}.
\end{equation}
More general functionals of the form $F(\lb_1(\Om),\dots,\lb_k(\Om))$ can also be considered.

Notice, that in usual isoperimetric inequalities, the constraint on $\Om$ is of the form $|\Om|=c$ or $\HH ^{N-1}(\partial \Om)=c$. Both of them imply that the resolvent operator of the Dirichlet Laplacian is compact so that the spectrum is well defined. A priori, it is not obvious that finite torsional rigidity alone would imply the same property. Of course, one can add an artificial constraint by setting that $\Om$ is bounded. Nevertheless, from a variational point, this amounts to restrict the class of admissible domains to a non-closed one.

It is not difficult to observe that for $k=2$ the solution of \eqref{eq05} consists on two disjoint and equal balls. For $k=3$ and $k=4$ numerical computations based on a genetic algorithm (see \cite{durus,bdo08}) lead to the intuition that the solution consists on $3$, respectively $4$, equal and disjoint balls. Although it is clear that for classical isoperimetric inequality for eigenvalues, the minimzer for $\lb_3$ and $\lb_4$ is not the union of $3$ or $4$ equal balls (in 2D), the same arguments are not valid for problem \eqref{eq05}. Starting from the numerical computations above, we are led to the following problems.

\medskip
\noindent {\bf Problem 1.} Prove or disprove that for $k=3,4$ in $\R^2$ the solution consists of $k$ equal and disjoint balls.

\medskip
\noindent {\bf Problem 2.} Is a similar assertion true for every $k$ and every dimension of the space?

\medskip
The main consequence of Theorem \ref{t02} is that problem \eqref{eq05} is well posed in the family of all open sets (possibly unbounded or of infinite measure of $\R^N$) with finite torsional rigidity. This is a consequence of the fact that if $P(\Om) < + \infty$, then $w_\Om\in L^1(\Om)$. By Theorem \ref{t02} the resolvent operator is compact, so the spectrum of the Dirichlet Laplacian consists only on eigenvalues, thus \eqref{eq05} is well posed.

Moreover, the problem has a natural extension on the family of measures $\MoN$, and we know that the family of domains is ``dense'' in the sense of $\g$-convergence in $\MoN$ (see section \ref{secsobo}). If classical isoperimetric inequalities are hardly well written on measures, since the perimeter or the area of a measure $\mu$ has no mechanical meaning, the torsional rigidity of a measure is well defined.

\section{Proofs of the main results}\label{secproo}

\begin{proof}[of Theorem \ref{t01}]
\medskip
\noindent {\bf Necessity.} Assume by contradiction that there exists $\delta>0$ and a sequence of points $x_n$ with $\|x_n\|\to+\infty$ such that for every $r>0$
$$\big|B_r(x_n)\cap\{x\ :\ w_\mu(x)\ge\delta\}\big|>0.$$
In view of the definition of $w_\mu$, for every $n\in\N$ there exists $R_n$ such that for every $r>0$
$$\big|B_r(x_n)\cap\{x\ :\ w_{\mu\lceil B_{R_n}}(x)\ge\delta/2\}\big|>0.$$
We introduce the functions
$$\vphi_n(x)=w_{\mu\lceil B_{R_n}}(x)\theta(x+x_n),$$
and we prove that $\vphi_n$ is bounded in $H^1_\mu$, converges to $0$ weakly in $H^1_\mu$ but does not converge strongly in $L^2(\R^N)$.

In order to bound the $H^1_\mu$-norm, we take $\vphi_n$ as test function in the equation satisfied by $w_{\mu\lceil B_{R_n}}$. So
$$\int_{\R^N}\nabla w_{\mu\lceil B_{R_n}}\nabla\vphi_n\,dx
+\int_{\R^N}w_{\mu\lceil B_{R_n}}\vphi_n\,dx
+\int_{\R^N}w_{\mu\lceil B_{R_n}}\vphi_n d\mu
=\int_{\R^N}\vphi_n\,dx.$$
Simple computations lead to
$$\begin{array}{ll}
&\ds\int_{\R^N}|\nabla w_{\mu\lceil B_{R_n}}|^2\theta(\cdot+x_n)\,dx
+\int_{\R^N}\nabla w_{\mu\lceil B_{R_n}}\nabla\theta(\cdot+x_n)w_{\mu\lceil B_{R_n}}\,dx
+\int_{\R^N}w_{\mu\lceil B_{R_n}}^2\theta(\cdot+x_n)\,dx\\
&\ds\hskip3truecm+\int_{\R^N}w_{\mu\lceil B_{R_n}}^2\theta(\cdot+x_n)\,d\mu
=\int_{\R^N}w_{\mu\lceil B_{R_n}}\theta(\cdot+x_n)\,dx.
\end{array}$$
Since $0\le w_{\mu\lceil B_{R_n}}\le1$, $0\le\theta\le1$, and $\theta$ has its support in $B_2$, we have in the right hand side
$$\int_{\R^N}w_{\mu\lceil B_{R_n}}\theta(\cdot+x_n)\,dx\le|B_2|.$$
Performing an integration by parts on the second term in left hand side, we get
$$\int_{\R^N}\nabla w_{\mu\lceil B_{R_n}}\nabla\theta(\cdot+x_n)w_{\mu\lceil B_{R_n}}\,dx
=-\frac{1}{2}\int_{\R^N}w_{\mu\lceil B_{R_n}}^2\Delta\theta(\cdot+x_n)\,dx,$$
so that
$$\Big|\int_{\R^N}\nabla w_{\mu\lceil B_{R_n}}\nabla\theta(\cdot+x_n)w_{\mu\lceil B_{R_n}}\,dx\Big|
\le\frac{1}{2}|B_2|\|\Delta\theta\|_\infty.$$
Finally, we get that
$$\begin{array}{ll}
&\ds\int_{\R^N}|\nabla w_{\mu\lceil B_{R_n}}|^2\theta(\cdot+x_n)\,dx
+\int_{\R^N}w_{\mu\lceil B_{R_n}}^2\theta(\cdot+x_n)\,dx\\
&\ds+\int_{\R^N}w_{\mu\lceil B_{R_n}}^2\theta(\cdot+x_n)\,d\mu
\le|B_2|\Big(1+\frac{1}{2}\|\Delta\theta\|_\infty\Big).
\end{array}$$
Using again that $0\le\theta\le1$ we obtain
$$\|\vphi_n\|^2_{H^1_\mu}\le2|B_2|\Big(1+\frac{1}{2}\|\Delta\theta\|_\infty
+\|\nabla\theta\|_\infty^2\Big),$$
so $(\vphi_n)_n$ is bounded in $H^1_\mu$.

We notice that $\vphi_n\rau0$ weakly in $H^1_\mu$ since the support of $\vphi_n$ lies in a ball of radius $2$ centered at the point $x_n$ which goes to infinity. In order to prove that $\vphi_n$ does not converge strongly in $L^2$ to $0$, it is enough to show that its $L^1$-norm
does not converge to $0$, since
$$\int_{\R^N}\vphi^2_n\,dx=\int_{B_2(x_n)}\vphi^2_n\,dx
\ge\frac{1}{|B_2|}\Big(\int_{B_2(x_n)}\vphi_n\,dx\Big)^2
=\frac{1}{|B_2|}\Big(\int_{\R^N}\vphi_n\,dx\Big)^2.$$
We have
$$\int_{B_2(x_n)}\vphi_n\,dx\ge\int_{B_1(x_n)}w_{\mu\lceil B_{R_n}}\,dx.$$
Since
$$-\Delta w_{\mu\lceil B_{R_n}}\le1\mbox{ in }\Dc'(\R^N),$$
we have
$$\Delta\Big(w_{\mu\lceil B_{R_n}}(x)+\frac{|x-x_n|^2}{2N}\Big)\ge0\mbox{ in }\Dc'(\R^N).$$
Consequently, since there are Lebesgue points of the set $\{x\ :\ w_{\mu\lceil B_{R_n}}(x)\ge\frac{\delta}{2}\}$ in any neighborhood of $x_n$, for every $r>0$ we get
$$|B_r(x_n)|\frac{\delta}{2}\le
\int_{B_r(x_n)}\Big(w_{\mu\lceil B_{R_n}}(x)+\frac{|x-x_n|^2}{2N}\Big)\,dx
=\int_{B_r(x_n)}w_{\mu\lceil B_{R_n}}(x)\,dx+c_Nr^{N+2}.$$
Therefore, there exists $r$ small enough depending only on $\delta$, such that
\begin{equation}\label{eq03}
\int_{B_r(x_n)}w_{\mu\lceil B_{R_n}}(x)\,dx\ge|B_r(x_n)|\frac{\delta}{4},
\end{equation}
hence $\vphi_n$ does not converge strongly in $L^2$ to $0$.

\bigskip
\noindent {\bf Sufficiency.} We start with the following 

\begin{lem}\label{l01}
Let $\mu\in\MoN$ be such that for some $0<\vps<1$ we have that $w_\mu\le\vps$. Then
$$\lb_1(\mu)\ge\frac{1}{\vps}.$$
\end{lem}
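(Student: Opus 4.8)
The plan is to prove the stronger quantitative estimate
$$\int_{\R^N}u^2\,dx\le\vps\,Q(u),\qquad Q(u):=\int_{\R^N}|\nabla u|^2\,dx+\int_{\R^N}u^2\,dx+\int_{\R^N}u^2\,d\mu,$$
for every $u\in H^1_\mu$, since dividing by $\int u^2$ and taking the infimum over $u\ne0$ gives exactly $\lb_1(\mu)\ge1/\vps$. Because both $Q$ and $u\mapsto\int u^2\,dx$ are continuous on $H^1_\mu$, it is enough to prove the inequality for $u$ bounded and with compact support; such functions are dense in $H^1_\mu$ (first truncate, then multiply by the cut-offs $\theta_k$, using the strong convergence $u\theta_k\to u$ in $H^1_\mu$ recalled in Section \ref{secsobo}).

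So fix $u\in H^1_\mu$ bounded with $\spt u\subset B_R$ for $R$ large; then $u\in H^1_0(B_R)\cap H^1_\mu$. Let $w_R:=\Rc_{\mu\lceil B_R}(1)$, which satisfies $0\le w_R\le w_\mu\le\vps$ and the weak identity $\int\nabla w_R\nabla\phi+\int w_R\phi\,dx+\int w_R\phi\,d\mu=\int\phi\,dx$ for every $\phi\in H^1_0(B_R)\cap H^1_\mu$. The core step is to test this identity with $\phi=u^2/(w_R+\delta)$ for $\delta>0$. The positive shift $\delta$ is precisely what makes $\phi$ an admissible test function: its gradient $2u\nabla u/(w_R+\delta)-u^2\nabla w_R/(w_R+\delta)^2$ lies in $L^2$ because $u$ is bounded, $w_R+\delta\ge\delta$, and $w_R,u\in H^1$; the term $\int\phi\,u^2\,d\mu$ is bounded by $\delta^{-2}\|u\|_\infty^2\int u^2\,d\mu<+\infty$; and $\spt\phi\subset B_R$. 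Expanding $\nabla\phi$ and applying Young's inequality $2u\,\nabla u\cdot\nabla w_R/(w_R+\delta)\le|\nabla u|^2+u^2|\nabla w_R|^2/(w_R+\delta)^2$ makes the $|\nabla w_R|^2$ contributions cancel, so $\int\nabla w_R\nabla\phi\le\int|\nabla u|^2$, and the weak identity then yields
$$\int_{\R^N}\frac{u^2}{w_R+\delta}\,dx=\int\phi\,dx\le\int|\nabla u|^2\,dx+\int\frac{w_R}{w_R+\delta}u^2\,dx+\int\frac{w_R}{w_R+\delta}u^2\,d\mu\le Q(u),$$
using $w_R/(w_R+\delta)\le1$.

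Finally, by Cauchy--Schwarz,
$$\int_{\R^N}u^2\,dx=\int\frac{|u|}{\sqrt{w_R+\delta}}\,|u|\sqrt{w_R+\delta}\,dx\le\Big(\int\frac{u^2}{w_R+\delta}\,dx\Big)^{1/2}\Big(\int u^2(w_R+\delta)\,dx\Big)^{1/2},$$
and since $w_R\le\vps$ the last factor is at most $\big((\vps+\delta)\int u^2\,dx\big)^{1/2}$. Combining with the previous bound and cancelling $(\int u^2\,dx)^{1/2}$ gives $\int u^2\,dx\le(\vps+\delta)\,Q(u)$; letting $\delta\to0$ and invoking density finishes the proof. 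I expect the one genuinely delicate point to be the justification that $u^2/(w_R+\delta)$ belongs to $H^1_0(B_R)\cap H^1_\mu$, which is why one works with the shifted function $w_R+\delta$ and with bounded, compactly supported $u$; the Young and Cauchy--Schwarz bookkeeping is routine.
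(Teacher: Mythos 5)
Your proof is correct, but it takes a genuinely different route from the paper's. The paper reduces, via the density of $\{u\theta_R\}$ and then via $\g$-convergence, to the case of a bounded open set $\Om$; there it takes a first eigenfunction $u_1\in L^\infty(\Om)\cap H^1_0(\Om)$, observes that $-\Delta u_1+u_1\le\lb_1(\Om)\|u_1\|_\infty$, and invokes the comparison principle against $w_\Om$ to get $\|u_1\|_\infty\le\lb_1(\Om)\|u_1\|_\infty\|w_\Om\|_\infty\le\vps\lb_1(\Om)\|u_1\|_\infty$. Your argument instead proves the quantitative Rayleigh bound $\int u^2\le\vps\,Q(u)$ directly for all $u\in H^1_\mu$, by testing the weak equation for $w_R=\Rc_{\mu\lceil B_R}(1)$ with the Picone-type function $\phi=u^2/(w_R+\delta)$: Young's inequality makes the $|\nabla w_R|^2$ terms cancel, giving $\int u^2/(w_R+\delta)\le Q(u)$, and Cauchy--Schwarz together with $w_R\le w_\mu\le\vps$ closes the estimate. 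Both proofs are valid; yours is more self-contained, since it avoids the $\g$-convergence density of smooth domains in $\MoN$ and the $L^\infty$-regularity of eigenfunctions on bounded sets, and it works with arbitrary capacitary measures from the start. The paper's route is shorter once one is willing to use those ingredients. The delicate point you flag -- admissibility of $u^2/(w_R+\delta)$ -- is indeed where the care is needed, and your use of the shift $\delta$ together with bounded, compactly supported $u$ handles it: $\phi$ is a product of two bounded $H^1$ functions, is supported in $B_R$ (so lies in $H^1_0(B_R)$), and $\int\phi^2\,d\mu\le\delta^{-2}\|u\|_\infty^2\int u^2\,d\mu<\infty$.
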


\begin{proof}[of Lemma \ref{l01}]
From the density of $\{u\theta_R\ :\ u\in H^1_\mu,\ R>0\}$ in $H^1_\mu$, it is enough to prove the assertion for a measure $\mu\lceil B_R$. Moreover, using the density for the $\g$-convergence of bounded open sets in the family measures with bounded regular set, it is enough to prove the assertion of the theorem only for bounded open sets.

Let $\Om$ be a bounded open set and let $u_1$ denote a nonzero first eigenfunction for the operator $-\Delta u+u$. Then $u_1\in L^\infty(\Om)\cap H^1_0(\Om)$, and we have
$$-\Delta u_1+u_1\le\lb_1(\Om)\|u_1\|_\infty,$$
so by monotonicity we get
$$0\le u_1\le w_{\Om}\lb_1(\Om)\|u_1\|_\infty,$$
thus passing to the supremum on the left hand side and using the hypothesis $w_\Om\le\vps$ we obtain
$$1\le\vps\lb_1(\Om),$$
which gives the conclusion.
\end{proof}

Coming back to to the proof of the sufficiency part, we can use the lemma above and the hypothesis $w_\mu\cdot1_{B_R^c}\to0$ in $L^\infty(\RR^N)$ as $R\to+\infty$. So, for every $\vps>0$ there exists $R_\vps$ such that for $R\ge R_\vps$
$$w_\mu\cdot1_{B_R^c}\le\vps\qquad\hbox{a.e.}$$
By monotonicity, we get that 
$$w_{\mu\lceil B_R^c}\le w_\mu\cdot1_{B_R^c}\le\vps\qquad\hbox{a.e.}$$
hence Lemma \ref{l01} gives that $\lb_1(\mu\lceil B_R^c)\ge\frac{1}{\vps}$. Making $\vps\to0$ we get that $\lb_1(\mu\lceil B_R^c)\to+\infty$, as $R\to+\infty$.

Let $\{u_n\}_n\sq H^1_\mu$ be a bounded sequence and assume $u_n\to0$ weakly in $H^1_\mu$, with $\|u_n\|_{H^1_\mu}\le M$. Then for every $R>0$ we have that $u_n\theta_R\to0$ strongly in $L^2(\R^N)$. But $u_n(1-\theta_R)\in H^1_{(\mu\lceil B_R^c)}$ so that it can be taken as test function for $\lb_1(\mu\lceil B_R^c)$. Hence
$$\begin{array}{ll}
&\lb_1(\mu\lceil B_R^c)\le\frac{\ds\int_{\R^N}\big|\nabla\big(u_n(1-\theta_R)\big)\big|^2\,dx
+\int_{\R^N}u_n^2(1-\theta_R)^2\,dx
+\int_{\R^N}u_n^2(1-\theta_R)^2\,d\mu}{\ds\int_{\R^N}u_n^2(1-\theta_R)^2\,dx}\\
&\hskip1.9truecm\le\frac{\ds2M\big(1+R^{-2}|\nabla\theta|_\infty^2\big)}
{\ds\int_{\R^N}u_n^2(1-\theta_R)^2\,dx},
\end{array}$$
so that
$$\int_{\R^N}u_n^2(1-\theta_R)^2\,dx
\le\frac{2M\big(1+R^{-2}|\nabla\theta|_\infty^2\big)}{\lb_1(\mu\lceil B_R^c)}.$$
By a standard argument we get that $u_n\to0$ strongly in $L^2(\R^N)$.
\end{proof}

\begin{proof}[of Theorem \ref{t02}]
{\bf 1) $\Rightarrow$ 2)} Assume $w_\mu\in L^1(\R^N)$. By the definition of $w_\mu$ we obtain that $w_\mu\in H^1_\mu$, and that $w_{\mu\lceil B_R}\rau w_\mu$ weakly in $H^1_\mu$ and that $w_\mu$ satisfies the equation 
\begin{equation}\label{eq02}
-\Delta w_\mu+w_\mu+\mu w_\mu=1
\end{equation}
in the weak sense, for test functions $v\in H^1_\mu$, with compact support.

Indeed, we have that $R\ra w_{\mu\lceil B_R}(x)$ is not decreasing and  and  $ w_{\mu\lceil B_R}(x)\ra w_\mu(x)$ a.e. But
$$\|w_{\mu\lceil B_R}\|_{H^1_\mu}^2 =\|w_{\mu\lceil B_R}\|_{L^1(\R^N)}.$$
The mapping $R \ra \|w_{\mu\lceil B_R}\|_{L^1(\R^N)}$ is not decreasing, and  for $R \ra +\infty$ we have $\|w_{\mu\lceil B_R}\|_{L^1(\R^N)} \ra \|w_{\mu}\|_{L^1(\R^N)}$ by the monotone convergence theorem. This proves that $(w_{\mu\lceil B_R})_R$ is bounded in $H^1_\mu$ and wealky converges in $H^1_\mu$ to $w_\mu$. Consequently, taking a test function $\vphi \in H^1_\mu$, with compact support, in the equation satisfied by $w_{\mu\lceil B_R}$ for $R$ large enough, we obtain by passage to the limit (\ref{eq02}).

Take now an arbitrary function  $v\in H^1_\mu$, $v\ge0$. We prove first that $v\in L^1(\R^N)$. We may take $\theta_Rv$ as test function in  (\ref{eq02}) and get
$$\int_{\R^N}\nabla w_\mu\nabla(\theta_R v)\,dx
+\int_{\R^N}w_\mu\theta_R v\,dx
+\int_{\R^N}w_\mu\theta_R v\,d\mu=\int_{\R^N}\theta_R v\,dx.$$
Passing to the limit as $R\to+\infty$, we obtain
$$\int_{\R^N}\nabla w_\mu\nabla v\,dx
+\int_{\R^N}w_\mu v\,dx+\int_{\R^N}w_\mu v\,d\mu=\int_{\R^N}v\,dx.$$
Since the left hand side is finite, then $v\in L^1(\R^N)$, and
$$\|v\|_{L^1(\R^N)}\le\|w_\mu\|_{H^1_\mu}\|v\|_{H^1_\mu}.$$
Therefore the embedding $H^1_\mu\subset L^1(\R^N)$ is continuous. This also means that $1\in(H^1_\mu)'$ and that equation \eqref{eq02} holds in $(H^1_\mu)'$.

In order to prove the compactness of the embedding, we assume that $v_n\in H^1_\mu$ has norm bounded by $M$ and converges weakly to $0$. Again, we may assume $v_n\ge0$. We consider $(1-\theta_R)v_n$ as test function for \eqref{eq02}.
Thus
\begin{equation}\label{neweq}
\begin{array}{ll}
&\ds\int_{\R^N}\nabla w_\mu\nabla\big((1-\theta_R)v_n\big)\,dx
+\int_{\R^N}w_\mu(1-\theta_R)v_n\,dx\\
&\ds\qquad+\int_{\R^N}w_\mu(1-\theta_R)v_n\,d\mu
=\int_{\R^N}(1-\theta_R)v_n\,dx.
\end{array}
\end{equation}
Since $w_\mu\in H^1_\mu$, for every $\vps>0$ there exists $R>0$ such that
$$\int_{\R^N\sm B_R}|\nabla w_\mu|^2\,dx
+\int_{\R^N\sm B_R}w_\mu^2\,dx
+\int_{\R^N\sm B_R}w_\mu^2\,d\mu\le\vps.$$
So, by \eqref{neweq} and H\"older inequality
$$\begin{array}{ll}
&\ds\int_{\R^N}(1-\theta_R)v_n\,dx
\le\vps^{1/2}\Big(\int_{\R^N}\big|\nabla\big((1-\theta_R)v_n\big)\big|^2\,dx
+\int_{\R^N}(1-\theta_R)v_n^2\,dx
+\int_{\R^N}(1-\theta_R)v_n^2\,d\mu\Big)^{1/2}\\
&\ds\qquad\le\vps^{1/2}\Big(2\int_{\R^N}|\nabla v_n|^2\,dx
+\frac{2|\nabla\theta|_\infty^2}{R^2}\int_{\R^N}v_n^2\,dx
+\int_{\R^N}v_n^2\,dx+\int_{\R^N}v_n^2\,d\mu\Big)^{1/2}.
\end{array}$$
Hence
$$\int_{\R^N}(1-\theta_R)v_n\,dx\le(2\vps)^{1/2}M\Big(1+\frac{|\nabla\theta|_\infty^2}{R^2}\Big)^{1/2}.$$
Since for every fixed $R$ we have
$$\lim_{\nif}\int_{\R^N}\theta_R v_n\,dx=0,$$
by a standard argument we get $|v_n|_{L^1(\R^N)}\to0$ as $\nif$.

\bigskip\noindent{\bf2) $\Rightarrow$ 1)} Let $C$ be the norm of the continuous injection $H^1_\mu\subset L^1(\R^N)$. Taking $w_{\mu\lceil B_R}$ as test function for $w_{\mu\lceil B_R}$ we get
$$\|w_{\mu\lceil B_R}\|^2_{H^1_\mu}
=\|w_{\mu\lceil B_R}\|_{L^1(\R^N)}\le C\|w_{\mu\lceil B_R}\|_{H^1_\mu}.$$
Consequently, $w_{\mu\lceil B_R}$ is uniformly bounded in $L^1(\R^N)$ and in $H^1_\mu$, for every $R>0$. Using the definition of $w_\mu$ and the monotone convergence theorem, we get that $w_\mu\in L^1(\R^N)$.
\end{proof}

Proposition \ref{p01} gives a list of useful tools for proving the compact embedding in $L^2(\R^N)$.

\begin{proof}[of Proposition \ref{p01}]

\noindent{\bf1) $\Rightarrow$ 5)}
Let $u_R \in H^1_{\mu\lceil B_R^c}$ be such that $\|u_R\|_{L^2 (\R^N)}=1$ and
$$\lb_1(\mu\lceil B_R^c)\le\frac{\int_{\R^N}|\nabla u_R|^2\,dx
+\int_{\R^N}u_R^2\,dx+\int_{\R^N}u_R^2\,d\mu}{\int_{\R^N}u_R^2\,dx}
\le1+\lb_1(\mu\lceil B_R^c).$$
Assume by contradiction that $\lb_1(\mu\lceil B_R^c)\not\to+\infty$. Then $(u_R)_R$ is bounded in $H^1_\mu$ and converges weakly to $0$ in $H^1_\mu$. This is a consequence of the fact that the support of $u_R$ is located outside the ball $B_R$. Condition 1) implies that $u_R$ has to converge strongly to $0$ in $L^2(\R^N)$ which is in contradiction with the fact that $\|u_R\|_{L^2 (\R^N)}=1$.

\bigskip\noindent{\bf5) $\Rightarrow$ 1)} The proof of this statement is implicitly contained in the proof of Theorem \ref{t01}.

\bigskip\noindent{\bf1) $\Rightarrow$ 3)} Assume 1) holds. In order to prove that $\Rc_{\mu\lceil B_R}\to\Rc_\mu$ in $\Lc(L^2(\R^N))$ as $R\to+\infty$ it is enough to consider a sequence $(f_n)_n$ in $L^2(\R^N)$ such that $f_n\rau f$ weakly in $L^2(\R^N)$ and prove that
$$\Rc_{\mu\lceil B_{R_n}}(f_n)\to\Rc_{\mu}(f)\mbox{ strongly in }L^2(\R^N).$$
For simplicity we set $\mu_n=\mu\lceil B_{R_n})$. Because of the compact injection assumption 1) and from the equiboundedness
$$\|\Rc_{\mu_n}(f_n)\|_{H^1_\mu}\le\|f_n\|_{L^2}$$
it is enough to prove simply that $\Rc_{\mu_n}(f_n)$ converges to $\Rc_{\mu}(f)$ weakly in $L^2(\R^N)$. Since $\Rc_{\mu_n}$ and $\Rc_{\mu}$ are self-adjoint operators this means
$$\langle f_n,\Rc_{\mu_n}(\psi)\rangle_{L^2}\to\langle f,\Rc_{\mu}(\psi)\rangle_{L^2}\qquad\forall\psi\in L^2(\R^N).$$
This will be a consequence of the fact that
\begin{equation}\label{weakR}
\Rc_{\mu_n}(\psi)\rau\Rc_{\mu}(\psi)\quad\hbox{ weakly in }L^2(\R^N),
\qquad\forall\psi\in L^2(\R^N)
\end{equation}
again because of the compact injection hypothesis 1). In order to prove \eqref{weakR} it is enough to assume $\psi\ge0$, so that the maximum principle gives that $\Rc_{\mu_n}(\psi)$ is a nondecreasing sequence of functions. If we denote $u_n=\Rc_{\mu_n}(\psi)$ then $u_n$ solves
\begin{equation}
\left\{\begin{array}{ll}
-\Delta u_n+u_n+u_n\mu_n=\psi\mbox{ in }(H^1_{\mu_n})'\\
u_n\in H^1_{\mu_n}.
\end{array}\right.
\end{equation}
There exists a constant $C$ such that for every $n$ we have $\|u_n\|_{H^1_\mu}\le C$, so without loss of generality we may assume that $u_n\rau u$ weakly in $H^1_\mu $, so by 1) strongly in $L^2(\R^N)$. Summarizing, in order to prove 3) it is enough to show that $u=\Rc_{\mu}(\psi)$, which is equivalent to
\begin{itemize}
\item $u\in H^1_\mu$: this is obvious;
\item $u$ solves the equation 
\begin{equation}\label{eq01}
\ds-\Delta u+u+u\mu=\psi\mbox{ in }(H^1_{\mu})'.
\end{equation}
\end{itemize}
We consider a test function $\vphi\in H^1_{\mu}$, with compact support. In order to prove that
$$\int_{\R^N}\nabla u\nabla\vphi\,dx+\int_{\R^N}u\vphi\,dx
+\int_{\R^N}u\vphi\,d\mu=\int_{\R^N}\psi\vphi\,dx,$$
we take $\vphi$ as test function for $u_n$, with $n$ large enough. Passing to the limit as $\nif$ we readily get \eqref{eq01}.

\bigskip\noindent{\bf3) $\Rightarrow$ 2)} This is an obvious consequence of the fact that $\Rc_{\mu\lceil B_R}$ are compact operators.

\bigskip\noindent{\bf 2) $\Rightarrow$ 5)} Assume by contradiction that $\lb_1(\mu\lceil B^c_R)\le M$ for every $R>0$. We denote $f_n=\lb_1(\mu\lceil B^c_n)u_n$, where $u_n\in H^1_{\mu\lceil B^c_n}$ is a first positive eigenfunction associated to the measure $\mu\lceil(B^c_n\cap B_{n'})$, with $\|u_n\|_{L^2}=1$, where $n'$ is large enough such that
$$\lb_1\big(\mu\lceil (B^c_n\cap B_{n'})\big)\le1+\lb_1(\mu\lceil B^c_n).$$
By monotonicty, we have that
$$u_n\le\Rc_{\mu}\big(\lb_1(\mu\lceil(B^c_n\cap B_{n'}))u_n\big),$$
so that $\Rc_{\mu}\big(\lb_1(\mu\lceil(B^c_n\cap B_{n'}))u_n\big)$ cannot converge strongly to $0$ in $L^2(\R^N)$. On the other hand, if we denote $f_n=\lb_1(\mu\lceil(B^c_n\cap B_{n'}))u_n$, we notice that $f_n$ converges to $0$ weakly in $L^2(\R^N)$, which contradicts hypothesis 2).

\bigskip\noindent{\bf4) $\Rightarrow$ 1)} Clearly, if 4) holds, then $\Rc_{\mu\lceil B_R}(1)\to\Rc_{\mu}(1)$ in $L^\infty(\R^N)$, hence
$$w_\mu-w_{\mu\lceil B_R}\to0\mbox{ in }L^\infty(\R^N),$$
so by monotonicity we have that $1_{B_R^c}\cdot w_\mu\to0$ in $L^\infty$, hence we can use Theorem \ref{t01}.

\bigskip\noindent{\bf1) $\Rightarrow$ 4)} In the case of a domain $\Om$, i.e. $\mu=\infty_{\R^N\sm\Om}$, this assertion is a consequence of Theorem \ref{t01} and \cite[Theorem 3.31]{bida06}. For measures, the proof is the same and is a direct consequence of Theorem \ref{t01} and of the inequality
$$0\le\Rc_\mu(f)-\Rc_{\mu\lceil B_n^c}(f)\le\|f\|_\infty\big(\Rc_\mu(1)-\Rc_{\mu\lceil B_n^c}(1)\big)\qquad\forall f\in L^\infty(\R^N),\ f\ge0.$$

\bigskip\noindent{\bf5) $\Rightarrow$ 6)} This is obvious, by monotonicity of the eigenvalues with respect to measures.

\bigskip\noindent{\bf6) $\Rightarrow$ 7)} This is obvious.

\bigskip\noindent{\bf7) $\Rightarrow$ 5)} Let $h$ be given by 7) and consider $h'<h$ such that a N-cube $C_{h'}$ of edge of length $h'$ centered at the origin is contained in the ball $B_h$. We cover the space $\R^N$ by closed cubes of edges of length $h'$ parallel to the axes and with centers in the points of the lattice $(\frac{h'}{2}\Z)^N$. We denote these cubes by $C_i$ for $i\in I$. We consider a function $\vphi\in C_c^\infty(C_{h'})$, such that $0\le\vphi\le1$, and $\vphi=1$ on the cube $C_{h'/2}$. We denote by $\vphi_i$ the function $\vphi$ supported by the cube $C_i$.

From hypothesis 7) for every $\vps>0$ there exists $R>0$ such that for all $\|x\|\ge R$
$$\lb_1(\mu\lceil B_h(x))\ge\frac{1}{\vps}.$$
Let us consider a function $u\in H^1_{\mu\lceil B_R^c}, u\ge 0$ such that 
\begin{equation}\label{ineq}
\lb_1(\mu\lceil B_R^c)\le\frac{\int_{\R^N}|\nabla u|^2\,dx
+\int_{\R^N}u^2\,dx+\int_{\R^N}u^2\,d\mu}
{\int_{\R^N}u^2\,dx}\le\lb_1(\mu\lceil B_R^c)+1.
\end{equation}
Then $\vphi_iu$ is a test function for $\lb_1(\mu\lceil C_i)$, and by monotonicity for $\lb_1(\mu\lceil B_i)$, where $B_i$ is the ball centered at the same point as $C_i$, with radius $h$. Consequently, we may write
$$\lb_1(\mu\lceil B_i)\int_{\R^N}|\vphi_i u|^2\,dx
\le\int_{\R^N}|\nabla(\vphi_i u)|^2\,dx
\int_{\R^N}|\vphi_i u|^2\,dx+\int_{\R^N}|\vphi_i u|^2\,d\mu.$$
Summing over $i\in I$ and decomposing $\nabla(\vphi_i u)$, we get
$$\begin{array}{ll}
&\ds\sum_i\lb_1(\mu\lceil B_i)\int_{\R^N}|\vphi_i u|^2\,dx
\le2\sum_i\int_{\R^N}|\vphi_i|^2|\nabla u|^2\,dx
+2\sum_i\int_{\R^N}u^2|\nabla\vphi_i|^2\,dx\\
&\ds\hskip3truecm+\sum_i\int_{\R^N}|\vphi_i u|^2\,dx
+\sum_i\int_{\R^N}|\vphi_i u|^2\,d\mu.
\end{array}$$
We notice that every point of the space is covered by at most $2^N$ cubes 
$C_i$ so we can write
$$\begin{array}{ll}
&\ds\frac{1}{\vps}\sum_i\int_{\R^N}|\vphi_i u|^2\,dx
\le2^{N+1}\Big(\int_{\R^N}|\nabla u|^2\,dx
+\|\nabla\vphi\|^2_\infty\int_{\R^N}u^2\,dx\Big)\\
&\ds\hskip3truecm+2^N\int_{\R^N}u^2\,dx+2^N\int_{\R^N}u^2\,d\mu.
\end{array}$$
For the left hand side, the average inequality gives
$$\sum_i\int_{\R^N}|\vphi_i u|^2\,dx
\ge\frac{1}{2^N}\int_{\R^N}u^2(\sum_i\vphi_i)^2\,dx
\ge\frac{1}{2^N}\int_{\R^N}u^2\,dx.$$
Consequently,
$$\begin{array}{ll}
&\ds\frac{1}{\vps}\frac{1}{2^N}\int_{\R^N}u^2\,dx
\le2^{N+1}\Big(\int_{\R^N}|\nabla u|^2\,dx
+|\nabla\vphi|^2_\infty\int_{\R^N}u^2\,dx\Big)\\
&\ds\hskip3truecm+2^N\int_{\R^N}u^2\,dx+2^N\int_{\R^N}u^2\,d\mu,
\end{array}$$
so that, using \eqref{ineq}
$$\frac{1}{\vps}\frac{1}{2^N}
\le2^{N+1}\big(\lb_1(\mu\lceil B_R^c)+\|\nabla\vphi\|^2_\infty\big)+2^N.$$
From this inequality, obviously 5) is a consequence of 7).

\bigskip\noindent{\bf8) $\Rightarrow$ 5)} Let us consider a function $u\in H^1_{\mu\lceil B_R^c}$, $u\ge0$, such that 
$$\lb_1(\mu\lceil B_R^c)\le\frac{\int_{\R^N}|\nabla u|^2\,dx
\int_{\R^N}u^2\,dx+\int_{\R^N}u^2\,d\mu}
{\int_{\R^N}u^2\,dx}\le\lb_1(\mu\lceil B_R^c)+1.$$
We consider the cubes $H_i=[x_i,x_i+h)$, where $x_i$ belongs to the lattice $(h\Z)^N$, and $\|x_i\|\ge r$. Then we can write
$$\int_{H_i}|\nabla u|^2\,dx
+\int_{H_i}u^2\,d\mu\ge\frac{1}{\vps}\int_{H_i}u^2\,dx,$$
and summing over $i\in I$ we obtain
$$\lb_1(\mu\lceil B_R^c)+1\ge\frac{1}{\vps}+1,$$
and 5) holds.

\bigskip\noindent{\bf7) $\Rightarrow$ 8)} Assume by contradiction that there exists $\vps>0$ such that on a sequence of cubes $H_n=[x_n,x_n+h)$, we have functions $u_n\in H^1_{\mu\lfloor H_n}$ with $\int_{H_n}u_n^2\,dx=1$ and
$$\int_{H_n}|\nabla u_n|^2\,dx+\int_{H_n}u_n^2\,d\mu
\le\frac{1}{\vps}\;.$$
Let $v_n=\min\{w_n,u_n\}$, where $w_n=w_{H_n}$. We get
$$\int_{H_n}|\nabla v_n|^2\,dx+\int_{H_n}v_n^2\,d\mu
\le\int_{H_n}|\nabla u_n|^2\,dx+\int_{H_n}|\nabla w_n|^2\,dx
+\int_{H_n}u_n^2\,d\mu\le\frac{1}{\vps}+|H_n|.$$
We shall prove that 7) fails, since $v_n\in H^1(\mu\lceil H_n)$ satisfies
$$\lb_1(\mu\lceil H_n)\int_{H_n}v_n^2\,dx
\le\int_{H_n}|\nabla v_n|^2\,dx+\int_{H_n}v_n^2\,dx+\int_{H_n}v_n^2\,d\mu
\le\frac{1}{\vps}+|H_n|+\int_{H_n}v_n^2\,d\mu.$$
If $\limsup_\nif\int_{H_n}v_n^2\,dx>0$, then 7) fails, in contradiction with the hypothesis.

Assume that $\lim_\nif\int_{H_n}v_n^2\,dx=0$. Making translations at the origin and passing to the limit, we get, by an abuse of notation,
$$\int_H(\min\{w_H, u\})^2\,dx=0,$$
where $u$ is a weak limit of the translations of $u_n$, hence has its $L^2$-norm equal to $1$. Obviously, this is impossible.
\end{proof}

\section {The general case}\label{secgene}

Let $1<p<+\infty$. We denote by $\MoNp$ the family of all nonnegative regular Borel measures, possibly $+\infty$ valued, that vanish on all sets of $p$-capacity zero (see the precise definition in \cite{hkm93}). For $\mu\in\MoNp$, we denote by $W^{1,p}_\mu$ the Banach space $W^{1,p}(\R^N)\cap L^p (\R^N,\mu)$, endowed with the norm
$$\|u\|^p_{1,\mu}=\int_{\R^N}\big(|\nabla u|^p+|u^p|\big)\,dx+\int_{\R^N}  |u|^p\,d\mu.$$
If $\mu\in\MoNp$, for every $f\in L^{p'}(\R^N)$ (where $\frac{1}{p}+\frac{1}{p'}=1$, or more generally for $f\in(W^{1,p}_\mu)'$) we may consider the elliptic PDE
\begin{equation}\label{pden}
-\Delta_p u+|u|^{p-2}u+\mu|u|^{p-2}u=f,\qquad u\in W^{1,p}_\mu
\end{equation}
whose precise sense has to be given in the weak form
$$\int_{\R^N}\big(|\nabla u|^{p-2}\nabla u\nabla\vphi+|u|^{p-2}u\vphi)\,dx
+\int |u|^{p-2}u\vphi\,d\mu
=\int_{\R^N}f\vphi \,dx \qquad\forall\phi\in W^{1,p}_\mu.$$

We still denote $\Rc(f)=u$, the nonlinear operator $\Rc:L^p_\mu\to L^p_\mu$ which associates to every $f$ the unique solution $u$ of the equation \eqref{pden}. Again, we note $w_\mu=\Rc(1)$, defined by approximation on the ball $B_R$, as in the linear case. We refer to \cite{dammu} for the study of monotonicity properties of $\Rc$ with respect to the measures $\mu$.

Then we have the following results.

\begin{thm}\label{t01p}
Let $\mu\in\MoNp$. The following assertions are equivalent.

\begin{enumerate}
\item $W^{1,p}_\mu\hr L^p(\R^N)$ is compact;
\item $w_\mu\cdot 1_{B_R^c}\to0$ in $L^\infty(\RR^N)$ as $R\to+\infty$;
\item $\Rc_\mu(f_n)\to\Rc_\mu(f)$ strongly in $L^p(\R^N)$, as soon as $f_n\rau f$ weakly in $L^{p'}(\R^N)$;
\item $\Rc_{\mu\lceil B_{R_n}}(f_n)\to\Rc_\mu(f)$ strongly in $L^p(\R^N)$, as soon as $f_n\rau f$ weakly in $L^{p'}(\R^N)$ and $R_n\to+\infty$;
\item $\lb_1(\mu\lceil B_R^c)\to+\infty$ as $R\to+\infty$;
\item for every $h>0$, $\lb_1(\mu\lceil B_h(x))\to+\infty$ as $\|x\|\to+\infty$;
\item there exists $h>0$ such that $\lb_1(\mu\lceil B_h(x))\to+\infty$ as $\|x\|\to+\infty$;
\item there exists $h>0$ such that for every $\vps>0$ there exists $r\ge0$ such that
$$\inf_{u\in W^{1,p}_{\mu\lfloor H}(\R^N),\ u\ne0}\frac{\int_H|\nabla u|^p\,dx
+\int_H u^p\,d\mu}{\int_Hu^p\,dx}\ge\frac{1}{\vps},$$
for every $N$-cube $H=[x,x+h)$, with $\|x\|\ge r$.
\end{enumerate}
\end{thm}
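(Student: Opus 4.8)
The plan is to reproduce, with the obvious changes, the combined proofs of Theorem \ref{t01} and Proposition \ref{p01}, pointing out the single place where the exponent $p=2$ was used in an essential way. I would establish the cycle $(1)\Rightarrow(2)\Rightarrow(5)\Rightarrow(1)$, the equivalences $(1)\Leftrightarrow(3)\Leftrightarrow(4)$, and the chain $(5)\Rightarrow(6)\Rightarrow(7)\Rightarrow(8)\Rightarrow(5)$. The implications among $(5),(6),(7),(8)$ are proved exactly as in Proposition \ref{p01}: the cube-covering argument passing from $(7)$ to $(5)$ uses only $(a+b)^p\le 2^{p-1}(a^p+b^p)$ and $\big(\sum_i\vphi_i\big)^p\le 2^{N(p-1)}\sum_i\vphi_i^p$ in place of the quadratic inequalities, while in the proof of $(7)\Rightarrow(8)$ one takes $w_H$ to be the $p$-torsion function of the cube $H$, for which $\int_H|\nabla w_H|^p\,dx\le\int_H w_H\,dx\le|H|$; and the Rellich--Kondrachov compactness of $W^{1,p}(B_{2R})\hr L^p(B_{2R})$ needed for the ``local'' terms is available since $p<p^*$.

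The first ingredient I need is the nonlinear analogue of Lemma \ref{l01}: \emph{if $\mu\in\MoNp$ and $w_\mu\le\vps$ for some $0<\vps<1$, then $\lb_1(\mu)\ge\vps^{1-p}$.} As in the Hilbertian case, by the density for the $\g$-convergence of bounded open sets it suffices to take $\mu=\infty_{\R^N\setminus\Om}$ with $\Om$ bounded; if $u_1\ge0$ is a first eigenfunction of $-\Delta_p u+|u|^{p-2}u$ on $\Om$, then $u_1\in L^\infty(\Om)$ and $-\Delta_p u_1+u_1^{p-1}\le\lb_1(\Om)\|u_1\|_\infty^{p-1}$, whereas $\lb_1(\Om)^{1/(p-1)}\|u_1\|_\infty\,w_\Om$ solves $-\Delta_p v+v^{p-1}=\lb_1(\Om)\|u_1\|_\infty^{p-1}$ (using $-\Delta_p(cv)=c^{p-1}(-\Delta_p v)$ for $c\ge0$); the comparison principle for the monotone operator $v\mapsto-\Delta_p v+v^{p-1}+\mu v^{p-1}$ (see \cite{dammu}) then gives $u_1\le\lb_1(\Om)^{1/(p-1)}\|u_1\|_\infty\vps$, hence $1\le\lb_1(\Om)^{1/(p-1)}\vps$. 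Since $p>1$, this yields $(2)\Rightarrow(5)$ via the monotonicity $w_{\mu\lceil B_R^c}\le w_\mu\cdot 1_{B_R^c}$, and $(5)\Rightarrow(1)$ follows exactly as in the sufficiency part of Theorem \ref{t01}, splitting a bounded sequence $u_n\rau 0$ in $W^{1,p}_\mu$ as $u_n\theta_R+u_n(1-\theta_R)$, handling the first term by the compact embedding on $B_{2R}$ and testing the quotient defining $\lb_1(\mu\lceil B_R^c)$ with the second.

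The implication $(1)\Rightarrow(2)$ is the nonlinear version of the necessity part of Theorem \ref{t01}, and the only step requiring a genuinely new tool. Assuming $(2)$ fails, I would produce $\delta>0$ and points $x_n$ with $\|x_n\|\to+\infty$ such that $|B_\rho(x_n)\cap\{w_\mu\ge\delta\}|>0$ for all $\rho>0$, fix once and for all a radius $\rho_0=\rho_0(\delta,N,p)<1/2$ (to be chosen below), pick $R_n$ large enough that $|B_{\rho_0}(x_n)\cap\{W_n\ge\delta/2\}|>0$ with $W_n:=w_{\mu\lceil B_{R_n}}$ (possible since $w_{\mu\lceil B_R}\uparrow w_\mu$ a.e.), and set $\vphi_n=W_n\,\theta(\cdot-x_n)$. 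Boundedness of $(\vphi_n)$ in $W^{1,p}_\mu$ would be obtained by taking $W_n\,\theta(\cdot-x_n)^p$ as test function in the equation for $W_n$ and estimating the cross term $\int|\nabla W_n|^{p-2}(\nabla W_n\cdot\nabla\theta)\,W_n\theta^{p-1}$ by Young's inequality, which replaces the exact integration by parts available only for $p=2$; and $\vphi_n\rau0$ in $W^{1,p}_\mu$ follows from reflexivity ($1<p<\infty$) together with the escape of the supports to infinity. The delicate point is that $(\vphi_n)$ must not converge to $0$ in $L^p(\R^N)$, and here the subharmonicity of $W_n+|x-x_n|^2/(2N)$ exploited for $p=2$, together with the sub-mean-value property, has no counterpart. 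Instead I would invoke the classical De Giorgi--Moser local boundedness estimate for bounded subsolutions of the inhomogeneous $p$-Laplace equation (see e.g. \cite{hkm93}): since $0\le W_n\le1$ and $-\Delta_p W_n\le1$ in $\Dc'(\R^N)$, and since $\|W_n\|_{L^\infty(B_{\rho_0}(x_n))}\ge\delta/2$, one gets $\tfrac{\delta}{2}\le C(N,p)\big(|B_{2\rho_0}|^{-1}\int_{B_{2\rho_0}(x_n)}W_n^p\,dx\big)^{1/p}+C(N,p)\,\rho_0^{p/(p-1)}$; choosing $\rho_0$ so small that the last term is $\le\delta/4$ gives $\int_{B_{2\rho_0}(x_n)}W_n^p\,dx\ge c(\delta,N,p)>0$ with $c$ independent of $n$, and since $\vphi_n=W_n$ on $B_{2\rho_0}(x_n)$ this forces $\int_{\R^N}\vphi_n^p\,dx\ge c>0$, contradicting $(1)$. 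I expect this replacement of the sub-mean-value argument by a quasilinear regularity estimate to be the main obstacle; everything else is routine.

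Finally, $(1)\Leftrightarrow(3)\Leftrightarrow(4)$ would be proved along the lines of the equivalences $1)\Leftrightarrow2)\Leftrightarrow3)\Leftrightarrow4)$ of Proposition \ref{p01}, with one additional standard nonlinear ingredient: given $f_n\rau f$ weakly in $L^{p'}$ and $u_n:=\Rc_\mu(f_n)$ (resp. $\Rc_{\mu\lceil B_{R_n}}(f_n)$), the energy estimate gives $\|u_n\|_{1,\mu}\le\|f_n\|_{L^{p'}}^{1/(p-1)}$, so up to a subsequence $u_n\rau u$ weakly in $W^{1,p}_\mu$ and, by $(1)$, strongly in $L^p$; passing to the limit in the term $\int|\nabla u_n|^{p-2}\nabla u_n\cdot\nabla\vphi$ of the weak formulation requires the Minty monotonicity trick for $-\Delta_p$ (which also yields $\nabla u_n\to\nabla u$ in $L^p$), after which the identification $u=\Rc_\mu(f)$ proceeds verbatim. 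The monotonicity properties of $\Rc$ and $w$ with respect to $\mu$ used throughout (in particular $w_{\mu\lceil B_R}\uparrow w_\mu$ and $w_{\mu\lceil B_R^c}\le w_\mu\cdot 1_{B_R^c}$) are those of \cite{dammu}.
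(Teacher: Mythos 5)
Your proposal is correct and, for the most part, follows the same route as the paper: the nonlinear version of Lemma~\ref{l01} with the exponent $\lb_1(\mu)\ge\vps^{1-p}$, the splitting $u_n\theta_R+u_n(1-\theta_R)$ for $(5)\Rightarrow(1)$, and the translated test functions $\vphi_n$ for $(1)\Rightarrow(2)$ are exactly what the paper does (implicitly or explicitly). Two of your variations are essentially cosmetic: for the $W^{1,p}_\mu$-bound on $\vphi_n$ the paper uses the specific cutoff $\beta(x)=(1-|x|)^p\cdot 1_{B_1}$ together with H\"older and the pointwise inequality $|\nabla\beta|^{p/(p-1)}\le p^{p/(p-1)}\beta$, whereas you use the standard Caccioppoli device of a $p$-th power cutoff and Young's inequality --- same estimate, different packaging; and for the local lower bound on $\int\vphi_n^p$ the paper cites \cite[Theorem~3.9]{mazi97}, which is precisely the De~Giorgi--Moser local boundedness estimate for $p$-subsolutions you invoke.

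The one place where you take a genuinely different route is the equivalence $(1)\Leftrightarrow(3)\Leftrightarrow(4)$, i.e.\ the identification of the weak $W^{1,p}_\mu$-limit of $\Rc_{\mu\lceil B_{R_n}}(f_n)$ with $\Rc_\mu(f)$. You propose Minty's monotonicity trick for the operator $v\mapsto-\Delta_p v+|v|^{p-2}v+\mu|v|^{p-2}v$. The paper instead argues variationally: it uses weak lower semicontinuity of the energy and the minimality of $\Rc_{\mu\lceil B_{R_n}}(f_n)$ for the functional $\frac{1}{p}\|\cdot\|^p_{1,\mu_n}-\int f_n\cdot\,$, tested against the truncations $\theta_R v$, to rule out a drop in the energy; by uniform convexity of $W^{1,p}_\mu$ this yields \emph{strong} convergence of $\Rc_{\mu\lceil B_{R_n}}(f_n)$ in $W^{1,p}_\mu$, after which passage to the limit in the weak formulation is immediate. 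The variational argument is self-contained and directly gives norm convergence; the Minty argument is more standard but needs a small extra step to pass from the inequality $\int f(u-v)\ge\langle A(v),u-v\rangle$, initially obtained for compactly supported $v$, to all $v\in W^{1,p}_\mu$ (by approximating $v$ by $\theta_R v$ and using continuity of the Nemytskii map $v\mapsto|\nabla v|^{p-2}\nabla v$), and then a further argument if one also wants $\nabla u_n\to\nabla u$ strongly in $L^p$, as you mention. Both are valid; the paper's route is slightly shorter. Everything else in your write-up matches the paper.
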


\begin{proof}
The only points different from those of Theroem \ref{t01} and Proposition \ref{p01} and which need some attention are listed below.

\bigskip\noindent{\bf1) $\Rightarrow$ 2)} The proof is similar to the necessity part of Theorem \ref{t01}. The only different point is concerned with the uniform bound of the $W^{1,p}_\mu$-norm of $\vphi_n$. We replace $\theta$ by another function $\beta\in C^\infty_c(B_2)$, with $0\le\beta\le1$, which will be precised later.

Notice first that 
$$\begin{array}{ll}
&\ds\int_{\R^N}|\nabla(w_{\mu\lceil B_{R_n}}\beta(\cdot+x_n))|^p\,dx
\le2^{p-1}\int_{\R^N}|\nabla w_{\mu\lceil B_{R_n}}|^p\beta(\cdot+x_n)^p\,dx\\
&\ds\hskip3truecm+2^{p-1}\int_{\R^N}w_{\mu\lceil B_{R_n}}^p|\nabla\beta(\cdot+x_n)|^p\,dx.
\end{array}$$
Taking $w_{\mu\lceil B_{R_n}}\beta(\cdot+x_n)$ as test function for the equation satisfied by $w_{\mu\lceil B_{R_n}}$, we get 
\begin{equation}\label{eq07}
\begin{array}{ll}
&\ds\int_{\R^N}|\nabla w_{\mu\lceil B_{R_n}}|^p\beta(\cdot+x_n)\,dx
+\int_{\R^N}w_{\mu\lceil B_{R_n}}|\nabla w_{\mu\lceil B_{R_n}}|^{p-2}\nabla w_{\mu\lceil B_{R_n}}\nabla\beta(\cdot+x_n)\,dx\\
&\ds\hskip2truecm+\int_{\R^N}w_{\mu\lceil B_{R_n}}^p\beta(\cdot+x_n)\,dx
+\int_{\R^N}w_{\mu\lceil B_{R_n}}^p\beta(\cdot+x_n)d\mu\le|B_2|.
\end{array}
\end{equation}
Setting
$$I=\int_{\R^N}w_{\mu\lceil B_{R_n}}|\nabla w_{\mu\lceil B_{R_n}}|^{p-2}\nabla
w_{\mu\lceil B_{R_n}}\nabla\beta(\cdot+x_n)\,dx,$$
by H\"older inequality we get
$$|I|\le\int_{\R^N}|\nabla w_{\mu\lceil B_{R_n}}|^{p-1}|\nabla\beta(\cdot+x_n)|\,dx
\le\Big(\int_{\R^N}|\nabla w_{\mu\lceil B_{R_n}}|^{p}|\nabla\beta(\cdot+x_n)|^\frac{p}{p-1} \,dx\Big)^{\frac{p-1}{p}}|B_2|^\frac{1}{p}.$$
We choose $\beta(x)= (1-|x|)^p\cdot1_{B_1}$ and notice that
$$|\nabla\beta|^\frac{p}{p-1}\le p^\frac{p}{p-1}\beta.$$
Thus, there exists a constant $C$ such that
\begin{equation}\label{eq06}
|I|\le C\Big(\int_{\R^N}|\nabla w_{\mu\lceil B_{R_n}}|^p\beta(\cdot+x_n) \,dx\Big)^\frac{p-1}{p}.
\end{equation}
Since $0\le\beta\le1$ we get the uniform bound of $\vphi_n$ by plugging \eqref{eq06} in \eqref{eq07}.

For a similar inequality to \eqref{eq03}, we may use \cite[Theorem 3.9]{mazi97}. 

\bigskip\noindent{\bf1) $\Lra$ 4)} One has only to prove that the weak $W^{1,p}_\mu$-limit of $\Rc_{\mu\lceil B_{R_n}}(f_n)$ is $\Rc_\mu (f)$. Assume that for some subsequence (still denoted with the same index) we have that $\Rc_{\mu\lceil B_{R_n}}(f_n)$ converges weakly in $W^{1,p}_\mu$ to some function $v$. We shall prove that $v=\Rc_\mu (f)$ relying on the $\G$-convergence principle. Indeed, on the one hand we have 
$$\int_{\R^N} |\nabla v|^p dx +\int_{\R^N} | v|^p dx+\int_{\R^N} | v|^p d\mu \leq \liminf \|\Rc_{\mu\lceil B_{R_n}}(f_n)\|_{W^{1,p}_\mu}^p.$$
If the inequality above is strict, using hypothesis 1), there exists $R$ large enough such that 
$$\frac{1}{p} (\int_{\R^N} |\nabla v|^p dx +\int_{\R^N} | v|^p dx+\int_{\R^N} | v|^p d\mu)-\int_{\R^N} fv dx $$
$$< \frac{1}{p} (\int_{\R^N} |\nabla (\theta_Rv)|^p dx +\int_{\R^N} |\theta_R v|^p dx+\int_{\R^N} |\theta_R v|^p d\mu)-\int_{\R^N} f\theta_Rv dx$$
$$ <  \liminf \frac{1}{p} \|\Rc_{\mu\lceil B_{R_n}}(f_n)\|_{W^{1,p}_\mu}^p -\int_{\R^N} f_n \Rc_{\mu\lceil B_{R_n}} dx.$$
Consequently, for some $n$ large enough, we also have
$$  \frac{1}{p} (\int_{\R^N} |\nabla (\theta_Rv)|^p dx +\int_{\R^N} |\theta_R v|^p dx+\int_{\R^N} |\theta_R v|^p d\mu)-\int_{\R^N} f_n\theta_Rv dx$$
$$ <   \frac{1}{p} \|\Rc_{\mu\lceil B_{R_n}}(f_n)\|_{W^{1,p}_\mu}^p -\int_{\R^N} f_n \Rc_{\mu\lceil B_{R_n}} dx,$$
which is in contradiction with the variational definition of $ \Rc_{\mu\lceil B_{R_n}}$.

Finally,  we get that $\Rc_{\mu\lceil B_{R_n}}(f_n)$ converges strongly in $W^{1,p}_\mu$ to $v$, thus one can pass to the limit the weak form of the equations associated to $ \Rc_{\mu\lceil B_{R_n}}$, for a fixed test function with compact support.
\end{proof}

\begin{thm}\label{t02p}
Let $\mu\in\MoNp$. The following assertions are equivalent.
\begin{enumerate}
\item $w_\mu\in L^1(\R^N)$;
\item $W^{1,p}_\mu\subset L^1(\R^N)$ and the injection is continuous.
\end{enumerate}
Moreover, if one of the two assertions above holds, then the embedding $W^{1,p}_\mu\hr L^1(\R^N)$ is compact.
\end{thm}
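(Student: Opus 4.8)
The plan is to follow, with the appropriate modifications, the proof of Theorem~\ref{t02}; the only step requiring genuinely new work is the passage to the limit in the nonlinear principal part when one identifies the limiting equation for $w_\mu$.

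\textbf{1) $\Rightarrow$ 2).} Testing the equation \eqref{pden} with right-hand side $1$ satisfied by $w_{\mu\lceil B_R}$ against $w_{\mu\lceil B_R}$ itself gives the energy identity
$$\|w_{\mu\lceil B_R}\|_{1,\mu}^p=\int_{\R^N}w_{\mu\lceil B_R}\,dx .$$
Since $R\mapsto w_{\mu\lceil B_R}$ is nondecreasing and converges a.e.\ to $w_\mu$, the monotone convergence theorem and the hypothesis $w_\mu\in L^1(\R^N)$ show that the right-hand side stays bounded and tends to $\|w_\mu\|_{L^1(\R^N)}$; hence $(w_{\mu\lceil B_R})_R$ is bounded in $W^{1,p}_\mu$ and converges to $w_\mu$ weakly in $W^{1,p}_\mu$. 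To prove that $w_\mu$ solves $-\Delta_p w_\mu+w_\mu^{p-1}+\mu w_\mu^{p-1}=1$ in the weak sense against compactly supported test functions, I would argue variationally as in the proof of ``1) $\Lra$ 4)'' of Theorem~\ref{t01p}: $w_{\mu\lceil B_R}$ is the minimiser of $J(u)=\frac1p\|u\|_{1,\mu}^p-\int_{\R^N}u\,dx$ over $W^{1,p}_{\mu\lceil B_R}$, every compactly supported $v\in W^{1,p}_\mu$ (in particular $\theta_{R'}w_\mu$ and $\theta_{R'}w_\mu+t\varphi$) is admissible for $R$ large, and $\theta_{R'}w_\mu\to w_\mu$ strongly in $W^{1,p}_\mu$ with $\int_{\R^N}\theta_{R'}w_\mu\,dx\to\int_{\R^N}w_\mu\,dx$. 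Combining these with weak lower semicontinuity forces $\|w_\mu\|_{1,\mu}^p=\int_{\R^N}w_\mu\,dx$ and, letting $R'\to+\infty$, that $t\mapsto\frac1p\|w_\mu+t\varphi\|_{1,\mu}^p-\int_{\R^N}w_\mu\,dx-t\int_{\R^N}\varphi\,dx$ is minimal at $t=0$ for every compactly supported $\varphi$; its derivative at $0$ is precisely the weak equation. (Alternatively one may upgrade to strong convergence $\nabla w_{\mu\lceil B_R}\to\nabla w_\mu$ in $L^p$ on every ball via uniform convexity, or use Minty's monotonicity trick.)

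Once this equation is in hand, the rest of \textbf{1) $\Rightarrow$ 2)} runs parallel to Theorem~\ref{t02}. For $v\in W^{1,p}_\mu$, $v\ge0$, inserting $\theta_R v$ as test function and letting $R\to+\infty$ yields
$$\int_{\R^N}|\nabla w_\mu|^{p-2}\nabla w_\mu\cdot\nabla v\,dx+\int_{\R^N}w_\mu^{p-1}v\,dx+\int_{\R^N}w_\mu^{p-1}v\,d\mu=\int_{\R^N}v\,dx ,$$
and H\"older's inequality bounds the left-hand side by $\|w_\mu\|_{1,\mu}^{p-1}\|v\|_{1,\mu}$, so $v\in L^1(\R^N)$ with $\|v\|_{L^1(\R^N)}\le\|w_\mu\|_{1,\mu}^{p-1}\|v\|_{1,\mu}$; splitting $v=v^+-v^-$ gives the continuous injection $W^{1,p}_\mu\subset L^1(\R^N)$ (and incidentally $1\in(W^{1,p}_\mu)'$). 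For the compactness, take $v_n$ bounded in $W^{1,p}_\mu$ with $v_n\rau0$; one may assume $v_n\ge0$, test the equation for $w_\mu$ with $(1-\theta_R)v_n$, bound the tail terms by the smallness of $\int_{B_R^c}(|\nabla w_\mu|^p+w_\mu^p)\,dx+\int_{B_R^c}w_\mu^p\,d\mu$ (available since $w_\mu\in W^{1,p}_\mu$) times the uniform $W^{1,p}_\mu$-bound on $v_n$, and dispose of the remaining term $\int_{\R^N}\theta_R v_n\,dx$ using the compact Rellich embedding of $W^{1,p}(B_{2R})$ into $L^1(B_{2R})$; a standard diagonal argument then gives $\|v_n\|_{L^1(\R^N)}\to0$.

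\textbf{2) $\Rightarrow$ 1).} Let $C$ be the norm of the injection $W^{1,p}_\mu\subset L^1(\R^N)$. By the energy identity, $\|w_{\mu\lceil B_R}\|_{1,\mu}^p=\|w_{\mu\lceil B_R}\|_{L^1(\R^N)}\le C\|w_{\mu\lceil B_R}\|_{1,\mu}$, whence $\|w_{\mu\lceil B_R}\|_{1,\mu}\le C^{1/(p-1)}$ and $\|w_{\mu\lceil B_R}\|_{L^1(\R^N)}\le C^{p/(p-1)}$ uniformly in $R$, so $w_\mu\in L^1(\R^N)$ by monotone convergence. The main obstacle throughout is the nonlinear limit identification in 1) $\Rightarrow$ 2): weak convergence in $W^{1,p}_\mu$ alone does not allow passing to the limit in $|\nabla w_{\mu\lceil B_R}|^{p-2}\nabla w_{\mu\lceil B_R}$, so one must exploit either the variational structure or the monotonicity of the $p$-Laplacian to recover the limit equation.
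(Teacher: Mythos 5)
Your proposal is correct and takes essentially the same approach as the paper, which only sketches the argument ("follow the steps of Theorem \ref{t02}; the only difference is identifying the limit equation, by repeating the variational argument from the proof of 1) $\Lra$ 4) in Theorem \ref{t01p}"). You have in fact supplied a detail the paper glosses over: the quoted 1) $\Lra$ 4) argument invokes hypothesis 1) (compactness of $W^{1,p}_\mu\hr L^p$), which is not available here a priori, but you correctly observe that for the specific right-hand side $f\equiv 1$ the monotonicity of $R\mapsto w_{\mu\lceil B_R}$ and the strong $W^{1,p}_\mu$-convergence $\theta_{R'}w_\mu\to w_\mu$ make the minimality-plus-weak-lower-semicontinuity argument go through without it, yielding first $\|w_\mu\|_{1,\mu}^p=\int w_\mu\,dx$ and then the weak Euler--Lagrange equation by one-sided differentiation. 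The remaining steps (H\"older bound for the continuous injection, the cut-off test function $(1-\theta_R)v_n$ together with Rellich on $B_{2R}$ for compactness, and the energy identity plus monotone convergence for 2) $\Rightarrow$ 1)) mirror the paper's proof of Theorem \ref{t02} exactly, adapted to the exponent $p$.
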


\begin{proof}
The proof follows the same steps as for Theorem \ref{t02}. The only point which is slightly different is to prove that $w_\mu$ satisfies the equation
\begin{equation}
-\Delta_p w_\mu+w_\mu^{p-2} w_\mu+\mu w_\mu^{p-2}w_\mu=1
\end{equation}
in the weak sense, for test functions $v\in W^{1,p}_\mu$ with compact support. The argument above for the proof of {\bf1) $\Lra$ 4)} can be repeated.
\end{proof}

\bibliographystyle{siam}
%\begin{thebibliography}{10}
%\bibliography{refs}
%\end{thebibliography}

%\bibliography{citace}
%\bibliographystyle{plain}
\def\cprime{$'$} \def\cprime{$'$}

\end{document}